\theoremstyle{definition}
 \theoremstyle{remark}
\numberwithin{equation}{section}
\def\BMOA{\text{BMOA}}
\begin{document}

\title[Hilbert matrix
between conformally invariant spaces]{Hilbert matrix operator Acting
between conformally invariant spaces}
\author{C. Bellavita}
\email{carlobellavita@ub.edu}
\address{Departament of Matem\'atica i Inform\'atica, Universitat de Barcelona, Gran Via 585, 08007 Barcelona, Spain.}

\author{G. Stylogiannis}
\email{stylog@math.auth.gr}
\address{Department of Mathematics, Aristotle University of Thessaloniki, 54124, Thessaloniki, Greece.}

\thanks{The first author is a member of Gruppo Nazionale per l’Analisi Matematica, la Probabilit\`a e le loro Applicazioni (GNAMPA) of Istituto Nazionale di Alta Matematica (INdAM) and he was partially supported by PID2021-123405NB-I00 by the Ministerio de Ciencia e Innovaci\'on and by the Departament de Recerca i Universitats, grant 2021 SGR 00087.\\
The second author was partially supported by the Hellenic Foundation for Research and Innovation (H.F.R.I.) under the '2nd Call for H.F.R.I. Research Projects to support Faculty Members \& Researchers' (Project Number: 4662).}

\keywords{Hilbert matrix operator; Conformally invariant Banach spaces; Analytic bounded mean oscillations; Conformally invariant Dirichlet spaces}

\subjclass{30H10; 30H35; 30H99; 47B91} 

\begin{abstract}
In this article we study the action of the the Hilbert matrix operator $\mathcal H$ from the space of bounded analytic functions into conformally invariant Banach spaces. In particular, we describe the norm of $\mathcal{H}$ from $H^\infty$ into $\text{BMOA}$ and we characterize the positive Borel measures
$\mu$ such that $\mathcal H$ is bounded from $H^\infty$ into the conformally invariant Dirichlet space $M(\mathcal{D}_\mu )$. For particular measures $\mu$, we also provide the norm of $\mathcal{H}$ from $H^\infty$ into $M(\mathcal{D}_\mu )$.
\end{abstract}

\maketitle

\section{introduction}
\noindent Let $\mathbb{D}$ be the unit disc in the complex plane $\mathbb{C}$ and let $\text{Hol}(\mathbb{D})$ be the space of analytic functions in $\mathbb{D}$. With $\mathbb{T}$ we denote the unit circle, that is $\mathbb{T}=\partial \mathbb{D}$.
The  classical Hilbert matrix  is 
\[
\mathcal{H}=\left(
\begin{array}{ccccc}
            1 & \frac{1}{2}  & \frac{1}{3}  & \dots  \\ [4pt]
  \frac{1}{2} & \frac{1}{3}  & \frac{1}{4}  & \dots  \\ [4pt]
  \frac{1}{3} & \frac{1}{4}  & \frac{1}{5}  & \dots \\
  \vdots  & \vdots & \vdots & \ddots\\
\end{array}
\right).
\]

\noindent 
The matrix $\mathcal{H}$ introduces an operator on spaces of analytic functions through its action on the sequence of Taylor coefficients. For $$
f(z)=\sum_{m=0}^{\infty}a_{m}z^{m}
$$
in the Hardy space $H^1$, $\mathcal{H}(f)$ is defined as
$$
\mathcal{H}(f)(z)=\sum_{n=0}^{\infty}\left(\sum_{k=0}^{\infty}\frac{a_{k}}{k+n+1}\right)z^{n}.
$$
It is clear that $\mathcal{H}(f)$ is an analytic function on the unit
disk, since Hardy's inequality \cite[p.~48]{Duren70}
$$
\sum_{m=0}^{\infty}\frac{|a_m|}{m+1}\leq \pi \|f\|_{H^{1}}
$$
implies that the Taylor coefficients of $\mathcal{H}(f)$ are bounded. 

\vspace{11 pt}\noindent In \cite{Diamantopoulos2000}, E. Diamantopoulos and A. Siskakis initiated the study of the Hilbert
matrix as an operator on Hardy spaces by using the fact that for every $ z\in\mathbb{D}$, the function ${\mathcal{H}}(f)$ has the equivalent integral representation
\begin{equation}
\label{Integral representation}
    \mathcal{H}(f)(z)=\int_{0}^{1}\frac{f(t)}{1-tz}dt.
\end{equation}
By considering $\mathcal{H}$ as the average of weighted composition operators, they proved that $\mathcal{H}$ is a bounded operator on every Hardy space $H^p$ with $p > 1$ and they estimated its norm. Their research was further extended by M. Dostanic, M. Jevtic and D. Vukotic in \cite{Dostanic2008NormOT}.
Among other results, they showed that
$$
\|\mathcal{H}\|_{H^{p}\to H^p}=\frac{\pi}{\sin(\frac{\pi}{p})},
$$
for $1<p<\infty$. 

\vspace{11 pt}\noindent 
The study of the Hilbert matrix operator was subsequently extended to include the Bergman spaces of the unit disc. Diamantopoulos \cite{Diamantopoulos2004},  Dostanic et al. \cite{Dostanic2008NormOT} and V. Bozin and B. Karapetrovic \cite{Boin2018} proved that $\mathcal H$ is bounded on the Bergaman space $A^p$ if and only if $p>2$. Moreover 
$$
\|\mathcal{H}\|_{A^p\to A^p}= \frac{\pi}{\sin\left(\frac{2\pi}{p}\right) }.
$$

\noindent
Following the classical case, a significant body of research has focused on generalizations of the Hilbert matrix operator, particularly regarding the characterization of their boundedness and compactness, see, for example, \cite{Bellavita2024}, \cite{bellavitaF} and \cite{Bellavitasurvey} for a recent survey on the argument.

\vspace{11 pt}
\noindent 
In this article, we study the action of $\mathcal{H}$ from $H^\infty$ into conformally invariant Banach spaces. We recall that a Banach space $X$ of analytic functions is conformally invariant if for every $f \in X$ and $\phi \in \text{Aut}(\mathbb{D})$
$$
\|f\|_{X}=\|f\circ \phi\|_{X},
$$
where the Mobius group $\text{Aut}(\mathbb{D})$ is the set made by all the one-to-one analytic functions mapping $\mathbb{D}$ onto itself, see \cite{Arzay1985}.
Main examples of conformaly invariant spaces are the BMOA space, the $Q_p$ spaces and the classical Dirichlet space $\mathcal{D}$, while $H^p$ with $1\leq p<\infty$ is not.

\vspace{11 pt}\noindent 
In this article, we consider the action of the Hilbert matrix operator from $H^\infty$ into the space of analytic bounded mean oscillation $\BMOA$ and the conformally invariant Dirichlet spaces $M(\mathcal{D}_\mu)$. 

\vspace{11 pt}\noindent 
The $\BMOA$ space consists of all the functions in the Hardy space $H^2$ such that
\begin{align*}
\|f\|_{\BMOA}=& |f(0)|+\left(\sup_{a\in\mathbb{D}}\int_{\mathbb{D}}\left|f'(z)\right|^{2}\log\left|\frac{1-\overline{a}z}{a-z}\right|^{2}dA(z)\right)^{1/2}<\infty,
\end{align*}
where, if $z=x+iy$, $dA(z)=\pi^{-1}dxdy$.
With the above norm $\BMOA$ is a conformally invariant  Banach space and 
$$  H^\infty\subsetneq \BMOA \subsetneq  \bigcap_{0<p<\infty}H^p. $$
The norm $\|f\|_{\BMOA}$ can be expressed by integration on $\mathbb{T}$,
\begin{equation}
    \label{equivalent BMOA norm}
\|f\|_{\BMOA}= |f(0)|+\left( \sup_{a\in\mathbb{D}}\int_{\mathbb{T}}|f(e^{i\theta})|^2\frac{1-|a|}{|e^{i\theta}-a|^2}\frac{d\theta}{2\pi}-|f(a)|^2\right)^{1/2}.
\end{equation}
For more information about $\BMOA$, we refer to \cite[Chapter VI]{garnett2006} and \cite{Girela2001}.

\vspace{11 pt}

\begin{thm}\label{T:norm h h to bmoa}
The Hilbert matrix operator maps $H^\infty$ into $\BMOA$ and its norm is $1+\dfrac{\pi}{\sqrt{2}}$.
\end{thm}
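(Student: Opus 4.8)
The plan is to normalize to $\|f\|_{H^\infty}\le 1$ and split the $\BMOA$ norm into its two summands. The evaluation part is immediate: since $\mathcal H(f)(0)=\int_0^1 f(t)\,dt$, one has $|\mathcal H(f)(0)|\le 1$, with equality for the constant function $f\equiv 1$. Everything then reduces to controlling the Möbius–invariant seminorm $\|g\|_*$ (the second summand of the $\BMOA$ norm), for which I would use the Garsia-type reformulation
\[
\|g\|_*^2=\sup_{a\in\D}\big\|\,g\circ\varphi_a-g(a)\,\big\|_{H^2}^2,\qquad \varphi_a(z)=\frac{a-z}{1-\bar a z},
\]
valid because $g$ is analytic, so its Poisson average at $a$ equals $g(a)$.

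Next I would fix $a$ and compute this $H^2$ quantity for $g=\mathcal H(f)$. Writing $g\circ\varphi_a-g(a)=\int_0^1 f(t)\big(k_a(t,\cdot)-k_a(t,0)\big)\,dt$ with $k_a(t,z)=(1-t\varphi_a(z))^{-1}$, the Taylor coefficients of each $k_a(t,\cdot)$ form a geometric progression, and summing the resulting Gram kernel collapses to the closed form
\[
\big\|g\circ\varphi_a-g(a)\big\|_{H^2}^2=(1-|a|^2)\int_0^1\!\!\int_0^1\frac{t\,s\,f(t)\overline{f(s)}}{(1-ta)(1-s\bar a)(1-ts)}\,dt\,ds .
\]
This identity is the technical heart of the reduction; the cancellations that contract the double sum $\sum_{n\ge1}$ into the single factor $(1-ts)^{-1}$ must be carried out carefully. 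Since $(1-ts)^{-1}\ge 0$ on $[0,1]^2$ and $|f|\le1$, the integrand is dominated in modulus by its value for the unimodular phase $f(t)=(1-ta)/|1-ta|$, giving for each $a$
\[
\sup_{\|f\|_{H^\infty}\le1}\big\|g\circ\varphi_a-g(a)\big\|_{H^2}^2=(1-|a|^2)\int_0^1\!\!\int_0^1\frac{t\,s}{|1-ta|\,|1-s\bar a|\,(1-ts)}\,dt\,ds=:\Phi(a).
\]
Because $|1-ta|\ge 1-t|a|$, the integrand only grows when $a$ is rotated onto $|a|$, so $\sup_{a\in\D}\Phi(a)=\sup_{\rho\in[0,1)}\Phi(\rho)$, and on the real axis the optimal phase is just $f\equiv1$. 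Thus the entire operator-norm computation collapses to the scalar problem $\sup_{\rho\in[0,1)}\Phi(\rho)$, where $\Phi(\rho)=\|h\circ\varphi_\rho-h(\rho)\|_{H^2}^2$ and $h=\mathcal H(1)(z)=-z^{-1}\log(1-z)$.

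Finally I would evaluate this supremum, which I expect to be the main obstacle. The lower estimate is clean: a direct computation gives $1-\varphi_\rho(z)=(1-\rho)(1+z)/(1-\rho z)$, and as $\rho\to 1^-$ the divergent $\log(1-\rho)$ terms cancel, yielding $h\circ\varphi_\rho-h(\rho)\to\log\frac{1-z}{1+z}$; since $\log\frac{1-z}{1+z}=-2\sum_{k\ge0}z^{2k+1}/(2k+1)$ has $H^2$-norm squared $4\sum_{k\ge0}(2k+1)^{-2}=\pi^2/2$, lower semicontinuity of the $H^2$ norm gives $\sup_\rho\Phi(\rho)\ge \pi^2/2$. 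The reverse inequality $\Phi(\rho)\le \pi^2/2$ for every $\rho\in[0,1)$ is the delicate point: it says the supremum is attained only in the limit $\rho\to1^-$, which I would establish either by proving that $\rho\mapsto\Phi(\rho)$ is increasing on $[0,1)$ or by evaluating the double integral in closed form (dilogarithms) and optimizing. Granting $\sup_\rho\Phi(\rho)=\pi^2/2$, we conclude $\|\mathcal H(f)\|_*\le \pi/\sqrt2$ for all $\|f\|_{H^\infty}\le 1$, hence $\|\mathcal H(f)\|_{\BMOA}\le 1+\pi/\sqrt2$, while $f\equiv1$ realizes this value; therefore $\|\mathcal H\|_{H^\infty\to\BMOA}=1+\pi/\sqrt2$.
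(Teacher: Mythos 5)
Your reduction of the problem is correct and genuinely different from the paper's, but it stops short of the decisive step. The Gram-kernel identity
\[
\bigl\|\mathcal H(f)\circ\varphi_a-\mathcal H(f)(a)\bigr\|_{H^2}^2=(1-|a|^2)\int_0^1\!\!\int_0^1\frac{t\,s\,f(t)\overline{f(s)}}{(1-ta)(1-s\bar a)(1-ts)}\,dt\,ds
\]
does check out (the cancellation $1-w_t\overline{w_s}=(1-|a|^2)(1-ts)/\bigl((1-ta)(1-s\bar a)\bigr)$ with $w_t=(\bar a-t)/(1-ta)$ works exactly as you predict), the positivity of the kernel justifies the modulus bound, and the rotation argument $\Phi(a)\le\Phi(|a|)$ is fine. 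Two problems remain. First, a minor one: your claimed \emph{equality} $\sup_{\|f\|_{H^\infty}\le1}\|\cdot\|_{H^2}^2=\Phi(a)$ is false for $a\neq 0$, because the extremal phase $f(t)=(1-ta)/|1-ta|$ is unimodular on the segment $[0,1)$ and hence, by the maximum principle, cannot be the restriction of a nonconstant function in the unit ball of $H^\infty$; only the inequality $\le\Phi(a)$ holds, which is all you use. Second, and this is the genuine gap: the inequality $\Phi(\rho)\le\pi^2/2$ for every $\rho\in[0,1)$ is the entire content of the sharp upper bound, and you only name two possible strategies (monotonicity of $\rho\mapsto\Phi(\rho)$, or a closed-form dilogarithm evaluation) without carrying either out. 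Note that $\sup_\rho\Phi(\rho)=\pi^2/2$ is precisely the seminorm computation of Danikas--Siskakis for $\mathcal H(1)$ that the paper cites for its \emph{lower} bound, so as written your argument reduces the hard half of the theorem to an unproved scalar inequality of comparable difficulty.

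The paper's upper bound avoids this analysis entirely. Starting from $\mathcal H(f)'(z)=\int_0^1 t f(t)(1-tz)^{-2}\,dt$ and deforming the path of integration to the circular arc $z(s)=s/((s-1)w+1)$, one gets $\mathcal H(f)'(w)=\frac{1}{1-w}\int_0^1\psi_s(w)f(\psi_s(w))\,ds$ with $\psi_s(\mathbb{D})\subseteq\mathbb{D}$, hence the pointwise domination $|\mathcal H(f)'(w)|\le \|f\|_{H^\infty}/|1-w|$. Plugging this into the logarithmically weighted Dirichlet form of the $\BMOA$ norm immediately gives $\|\mathcal H(f)\|_{\BMOA}\le(1+\|\log(1-z)\|_{\BMOA})\|f\|_{H^\infty}=(1+\pi/\sqrt2)\|f\|_{H^\infty}$, reducing everything to the single known value $\|\log(1-z)\|_{\BMOA}=\pi/\sqrt2$. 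If you want to complete your route independently, the cleanest fix is to prove this pointwise bound on $\mathcal H(f)'$ (or, equivalently, establish $\Phi(\rho)\le\pi^2/2$ directly); without one of these your proposal establishes only the lower bound $\|\mathcal H\|_{H^\infty\to\BMOA}\ge 1+\pi/\sqrt2$.
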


\noindent The boundedness of $\mathcal{H}$ from $H^\infty$ into $\BMOA$ has been first observed by B. Lanucha, M. Nowak and M. Pavlovic in \cite{Lanucha2012}. Actually, it is also true that 
$$
\mathcal{H}\left( H^\infty\right)\subseteq \bigcap_{1<p<\infty}\Lambda\left(p,\frac{1}{p}\right)\subset \BMOA,
$$
where $\Lambda(p,\frac{1}{p})$ are the mean Lipschitz spaces (see later for definition).

\vspace{11 pt}\noindent 
Subsequently, we fix our attention on the conformally invariant Dirichlet space $M(\mathcal{D}_\mu )$.
Let $d\mu(z)$ be a positive, Borel measure in $\mathbb{D}$. The spaces $M(\mathcal{D}_\mu)$ consists of all the functions $f \in \text{Hol}(\mathbb{D})$ such that 
\begin{equation}\label{second normmdm2}
     \|f\|_{M(\mathcal{D}_{\mu})}=|f(0)|+\sup_{\phi \in \text{Aut}(\mathbb{D})}\left(\int_{\mathbb{D}} |f'(w)|^{2}U_{\mu}(\phi(w))dA(w)\right)^{1/2},
\end{equation}
where
$$
U_{\mu}(z) = \int_{\mathbb{D}}\log\left| \frac{1-\overline{w}z}{z-w}\right|^{2}d\mu(w) 
$$
is a superharmonic function in $\mathbb{D}$.
The most famous examples  of $M(\mathcal{D}_\mu)$ spaces are the $Q_p$ spaces, see \eqref{norm Q_p} and \cite{AULASKARI1995}. An equivalent expression for $\|f\|_{M(\mathcal{D}_\mu)}$ is

\begin{equation}
\label{second norm Mdm}
\|f\|_{M(\mathcal{D}_\mu)} \sim |f(0)|+ \sup_{\phi \in \text{Aut}(\mathbb{D})}\left( \int_{\mathbb{D}} |f'(w)|^{2}V_{\mu}(\phi(w))dA(w)\right)^{1/2},
\end{equation}
where
$$
 V_{\mu}(z) = \int_{\mathbb{D}}(1-|\sigma_{z}(w)|^{2})d\mu(w).
$$

\vspace{11 pt}\noindent
We characterize the measures $d\mu(z)$ such that the Hilbert matrix operator $\mathcal H$ is bounded from $H^\infty$ into $M(\mathcal{D}_\mu )$ . 

\begin{thm}\label{h from h to mdm}
Let $d\mu(z)$ be a positive Borel measure on $\mathbb{D}$. The following conditions are equivalent:
\begin{enumerate}[(i)]
\item The Hilbert matrix operator $\mathcal{H}$ sends $H^\infty$ into $M(\mathcal{D}_{\mu})$.
\item $\log(1-z) \in M(\mathcal{D}_{\mu})$.
\item 
\begin{equation*}\label{cond 1}
    \sup_{\lambda\in\mathbb{T}}\int_{\mathbb{D}}\frac{V_{\mu}(z)}{|1-\lambda z|^{2}}dA(z)<\infty.
\end{equation*}
\item
\begin{equation*}\label{cond 2}
    \sup_{a\in\mathbb{D}}\int_{\mathbb{D}}\frac{V_{\mu}(z)}{|1-az|^{2}}dA(z)<\infty.
\end{equation*}
\end{enumerate}
\end{thm}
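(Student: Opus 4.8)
The plan is to prove (ii)$\Rightarrow$(i)$\Rightarrow$(ii) to obtain (i)$\Leftrightarrow$(ii), and then to close the loop (iv)$\Rightarrow$(ii)$\Rightarrow$(iii)$\Rightarrow$(iv) among the three norm/integral conditions. Throughout I would work with the equivalent seminorm $\sup_{\phi}\int_{\mathbb{D}}|f'(w)|^2 V_\mu(\phi(w))\,dA(w)$ from \eqref{second norm Mdm}, and use that, since $|[\log(1-w)]'|^2=|1-w|^{-2}$, condition (ii) is exactly $\sup_{\phi\in\text{Aut}(\mathbb{D})}\int_{\mathbb{D}}\frac{V_\mu(\phi(w))}{|1-w|^2}\,dA(w)<\infty$. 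The two groups of conditions are linked through the single test function $\log(1-z)$, whose role is forced by \eqref{Integral representation}: $\mathcal{H}(1)(z)=\int_0^1\frac{dt}{1-tz}=-\frac{\log(1-z)}{z}$.

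For (ii)$\Rightarrow$(i) I would argue by pointwise domination of derivatives. For $f\in H^\infty$ with $\|f\|_\infty\le 1$, differentiating \eqref{Integral representation} gives $|\mathcal{H}(f)'(w)|\le\int_0^1\frac{t\,dt}{|1-tw|^2}\le\int_0^1\frac{dt}{|1-tw|^2}$, and I would invoke the elementary estimate $\int_0^1\frac{dt}{|1-tw|^2}\le\frac{C}{|1-w|}$ on $\mathbb{D}$. Hence $|\mathcal{H}(f)'(w)|^2\le C^2|1-w|^{-2}=C^2|[\log(1-w)]'|^2$ pointwise, so for every $\phi$ the weighted integral of $|\mathcal{H}(f)'|^2$ is dominated by $C^2\int\frac{V_\mu(\phi(w))}{|1-w|^2}\,dA(w)$; with $|\mathcal{H}(f)(0)|\le 1$ this gives $\mathcal{H}(f)\in M(\mathcal{D}_\mu)$, controlled by (ii). For (i)$\Rightarrow$(ii) I would feed $f\equiv 1$ into (i), so $\mathcal{H}(1)=-\log(1-z)/z\in M(\mathcal{D}_\mu)$. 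Since $[\mathcal{H}(1)]'(0)=\tfrac12\neq 0$, continuity gives $|[\mathcal{H}(1)]'|\ge c>0$ on a disk $\{|w|\le\rho\}$; testing the seminorm of $\mathcal{H}(1)$ with involutions $\sigma_a$ and passing to the pseudohyperbolic disk $\sigma_a(\{|w|\le\rho\})$ (on which $V_\mu$ is essentially constant and $|\sigma_a'|^2\asymp(1-|a|^2)^{-2}$) would show $V_\mu(a)\lesssim\|\mathcal{H}(1)\|_{M(\mathcal{D}_\mu)}^2$, i.e. $V_\mu$ is bounded. Finally, since $[\mathcal{H}(1)]'(w)\sim(1-w)^{-1}$ near $w=1$, I would split $\int\frac{V_\mu(\phi)}{|1-w|^2}\,dA$ into $\{|1-w|<\delta\}$ (controlled by the seminorm of $\mathcal{H}(1)$) and its complement (controlled by $\|V_\mu\|_\infty\,|\mathbb{D}|$), obtaining (ii) uniformly in $\phi$.

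For the three integral conditions I would change variables $z=\phi(w)$ in the seminorm of $\log(1-z)$, writing $\phi^{-1}(z)=\sigma_a(\zeta z)$ with $a\in\mathbb{D}$, $\zeta\in\mathbb{T}$. A direct computation gives $\frac{|(\phi^{-1})'(z)|^2}{|1-\phi^{-1}(z)|^2}=\frac{(1-|a|^2)^2}{|1-\bar a|^2}\cdot\frac{1}{|1-p_1 z|^2|1-p_2 z|^2}$ with $p_1=\bar a\zeta\in\mathbb{D}$, $p_2=\bar\lambda\in\mathbb{T}$ where $\lambda=\phi(1)$, and---crucially---$|p_1-p_2|=\frac{1-|a|^2}{|1-\bar a|}$, so the prefactor equals $|p_1-p_2|^{-2}$ exactly. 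For (iv)$\Rightarrow$(ii) I would split by partial fractions, $\frac{1}{|1-p_1z|^2|1-p_2z|^2}\le\frac{2}{|p_1-p_2|^2}\big(\frac{1}{|1-p_1z|^2}+\frac{1}{|1-p_2z|^2}\big)$; the cancellation collapses the bound to $2\big(\int\frac{V_\mu}{|1-p_1z|^2}\,dA+\int\frac{V_\mu}{|1-p_2z|^2}\,dA\big)\le 4\sup_{b}\int_{\mathbb{D}}\frac{V_\mu}{|1-bz|^2}\,dA$, uniformly in $\phi$ (the boundary term $p_2$ being controlled by (iv) via Fatou). For (ii)$\Rightarrow$(iii) I would run this in reverse along the one-parameter family $\phi_s^{-1}(z)=\sigma_s(-\bar\lambda z)$, $s\uparrow 1$, whose integrand tends to $\frac{4}{|1-\bar\lambda z|^2|1+\bar\lambda z|^2}$; Fatou's lemma together with $|1+\bar\lambda z|\le 2$ then yield $\int\frac{V_\mu}{|1-\bar\lambda z|^2}\,dA\le\sup_\phi\int\frac{V_\mu(\phi(w))}{|1-w|^2}\,dA(w)$ for every $\lambda\in\mathbb{T}$, which is (iii). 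Finally (iii)$\Leftrightarrow$(iv) is the statement that $a\mapsto\int_{\mathbb{D}}\frac{V_\mu(z)}{|1-az|^2}\,dA(z)$, being subharmonic in $a$ (an average of the subharmonic kernels $|1-az|^{-2}$ against $V_\mu\ge 0$), attains its supremum over $\mathbb{D}$ on $\mathbb{T}$, while $\sup_{\mathbb{T}}\le\sup_{\mathbb{D}}$ follows from Fatou.

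The main obstacle is the identity $\frac{(1-|a|^2)^2}{|1-\bar a|^2}=|p_1-p_2|^{-2}$ that makes the change of variables in (ii)$\Leftrightarrow$(iv) work: without this exact cancellation the transported kernel carries both an interior singularity at $p_1$ and a boundary singularity at $p_2$, and the M\"obius supremum looks strictly heavier than conditions (iii)--(iv). Getting the bookkeeping of $a,\zeta,\lambda$ right so that the two singular factors separate and the prefactor compensates $|p_1-p_2|^{-2}$ precisely is the delicate computational heart; the other place needing care is the boundedness of $V_\mu$ in (i)$\Rightarrow$(ii), which rests on the Harnack-type fact that $V_\mu$ varies by bounded factors over pseudohyperbolic disks.
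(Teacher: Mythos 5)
Your proof is correct, and its core coincides with the paper's: the same test function $\mathcal{H}(1)=-z^{-1}\log(1-z)$ drives (i)$\Rightarrow$(ii), and your (iv)$\Rightarrow$(ii) is, in different notation, exactly the paper's (iv)$\Rightarrow$(i) computation --- the M\"obius change of variables producing the kernel $|1-p_1z|^{-2}|1-p_2z|^{-2}$ with $p_1\in\mathbb{D}$, $p_2\in\mathbb{T}$, followed by the partial-fraction splitting whose coefficients are absorbed by the prefactor $\frac{(1-|a|^2)^2}{|1-a|^2}=|p_1-p_2|^{2}$ (you wrote $|p_1-p_2|^{-2}$, a sign typo in the exponent, but you use the correct value, since the prefactor must cancel the $2/|p_1-p_2|^2$ coming from the splitting). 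The genuine differences are in the supporting steps, and they all hold up. First, where the paper gets $|\mathcal{H}(f)'(w)|\le\|f\|_\infty/|1-w|$ from the contour-deformation identity $\mathcal{H}(f)'(z)=b(z)/(1-z)$ with $\|b\|_\infty\le\|f\|_\infty$, you use the elementary estimate $\int_0^1|1-tw|^{-2}\,dt\le C/|1-w|$, which is true (split at $1-t=|1-w|/2$ and use $|1-tw|\ge\max\{1-t,\,|1-w|-(1-t)\}$); this is more self-contained but loses the constant $1$, which the paper needs later for the norm computations and which is irrelevant here. Second, you reprove $\sup_{a}V_\mu(a)<\infty$ by a pseudohyperbolic Harnack argument rather than citing \cite{BAO2018a}, and you prove (iii)$\Leftrightarrow$(iv) directly rather than citing \cite{Bao2018}. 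The latter is the one place to tighten the wording: the interior maximum principle for subharmonic functions does not by itself give $\sup_{\mathbb{D}}F\le\sup_{\mathbb{T}}F$ for $F(a)=\int_{\mathbb{D}}V_\mu(z)|1-az|^{-2}\,dA(z)$ without boundary regularity of $F$. The correct one-line version of your idea is that for each fixed $z\in\mathbb{D}$ the kernel $|1-az|^{-2}=\bigl|(1-az)^{-1}\bigr|^2$ is subharmonic in $a$ on a neighborhood of $\overline{\mathbb{D}}$, hence dominated by the Poisson integral of its restriction to $\mathbb{T}$; integrating against $V_\mu\,dA(z)$ and applying Tonelli gives $F(a)\le\int_{\mathbb{T}}F(\lambda)P(a,\lambda)\,\frac{|d\lambda|}{2\pi}\le\sup_{\mathbb{T}}F$, while the reverse inequality is Fatou, as you say.
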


\noindent We highlight that the proof of Theorem \ref{h from h to mdm} is similar to the proof of the analogous result for the Ces{\'a}ro operator, see \cite[Theorem 1.2]{Bao2019}. 
In addition, for some measures $d\mu(z)$, we also provide the norm of $\mathcal{H}$ from $H^\infty$ into $M(\mathcal{D}_\mu)$.

\begin{thm}\label{norm h from h to mdm}
Let $d\mu(z)$ be a positive, radial, Borel measure on $\mathbb{D}$. If $\mathcal{H}:H^\infty\to M(\mathcal{D}_\mu)$ is bounded then 
\begin{equation*}\label{Norm MDm}
   \|\mathcal{H}\|_{H^\infty\to M(\mathcal{D}_\mu)}=1+\left(\int_{\mathbb{D}}\frac{4}{|1-z^{2}|^{2}}U_{\mu}(z)dA(z)\right)^{1/2}.
\end{equation*}
\end{thm}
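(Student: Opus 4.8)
The plan is to compute the operator norm as $\|\mathcal{H}\|=\sup\{\|\mathcal{H}(f)\|_{M(\mathcal{D}_\mu)}:\|f\|_{H^\infty}\le 1\}$, splitting the norm \eqref{second normmdm2} into the point evaluation $|\mathcal{H}(f)(0)|$ and the conformally invariant seminorm $\sup_{\phi}\big(\int_{\mathbb{D}}|(\mathcal{H}f)'(w)|^2U_\mu(\phi(w))\,dA(w)\big)^{1/2}$. Since $\mathcal{H}(f)(0)=\int_0^1 f(t)\,dt$, the constant $1$ in the formula is forced by $|\mathcal{H}(f)(0)|\le\int_0^1|f(t)|\,dt\le 1$, with equality for $f\equiv 1$; the remaining content is to show that the seminorm is dominated by, and in a limit attains, $\big(\int_{\mathbb{D}}\frac{4}{|1-z^2|^2}U_\mu(z)\,dA(z)\big)^{1/2}$. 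Throughout I use that a radial $\mu$ makes $U_\mu$ radial, which diagonalises the seminorm.

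For the lower bound I test the extremal candidate $f\equiv 1$, for which $\mathcal{H}(1)(z)=-\log(1-z)/z$ and $\mathcal{H}(1)(0)=1$. Inserting the involutive automorphisms $\sigma_a(z)=(a-z)/(1-az)$, $a\in(0,1)$, into the seminorm and changing variables $z=\sigma_a(w)$ turns the integral into $\int_{\mathbb{D}}|(\mathcal{H}(1)\circ\sigma_a)'(z)|^2U_\mu(z)\,dA(z)$. A direct computation, using $1-\sigma_a(z)=(1-a)(1+z)/(1-az)$, shows that $(\mathcal{H}(1)\circ\sigma_a)'(z)\to -2/(1-z^2)$ pointwise as $a\to 1^-$; by Fatou's lemma the seminorm of $\mathcal{H}(1)$ is at least $\big(\int_{\mathbb{D}}\frac{4}{|1-z^2|^2}U_\mu\,dA\big)^{1/2}$, whence $\|\mathcal{H}\|\ge 1+\big(\int_{\mathbb{D}}\frac{4}{|1-z^2|^2}U_\mu\,dA\big)^{1/2}$.

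For the upper bound, fix $\|f\|_\infty\le 1$. After the same change of variables the seminorm squared equals $\sup_{\psi\in\text{Aut}(\mathbb{D})}\int_{\mathbb{D}}|(\mathcal{H}(f)\circ\psi)'|^2U_\mu\,dA$. Writing $\mathcal{H}(f)\circ\psi=\sum_n c_n(\psi)z^n$ and using that $U_\mu$ is radial, the cross terms vanish and each such integral collapses to the diagonal sum $\sum_{n\ge 1}n^2|c_n(\psi)|^2\omega_{n-1}$, where $\omega_j=\int_{\mathbb{D}}|z|^{2j}U_\mu(z)\,dA(z)\ge 0$. The target value has the same shape: since $g_0(z):=\log\frac{1+z}{1-z}=2\sum_{k\ge 0}z^{2k+1}/(2k+1)$ satisfies $g_0'=2/(1-z^2)$, one has $\int_{\mathbb{D}}\frac{4}{|1-z^2|^2}U_\mu\,dA=\int_{\mathbb{D}}|g_0'|^2U_\mu\,dA=4\sum_{k\ge 0}\omega_{2k}$. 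Thus the upper bound reduces to the single coefficient inequality $\sum_{n\ge 1}n^2|c_n(\psi)|^2\omega_{n-1}\le 4\sum_{k\ge 0}\omega_{2k}$, valid for every $\psi$ and every $f$ in the unit ball of $H^\infty$, with $g_0$ as the extremiser.

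The main obstacle is precisely this last inequality, i.e. controlling the supremum over all automorphisms. The naive estimate $|(\mathcal{H}f)'(w)|\le\int_0^1 t|1-tw|^{-2}\,dt$ is not enough: near $w=1$ along non-real directions this bound exceeds $2/|1-w^2|$, so one cannot argue pointwise and genuine cancellation in $\int_0^1 tf(t)(1-tw)^{-2}\,dt$ must be exploited. I would therefore attack the coefficient inequality directly, estimating $n^2|c_n(\psi)|^2$ against the moments $\omega_{n-1}$ uniformly in $\psi$, and check that equality propagates to $g_0$, matching the lower bound. As a consistency check, the weight $\mu=\delta_0$ gives $U_{\delta_0}(z)=\log|z|^{-2}$ and $\omega_{n-1}=1/n^2$, so $n^2\omega_{n-1}=1$; the diagonal sum becomes $\sum_{n\ge 1}|c_n(\psi)|^2$, the seminorm reduces to the Garsia expression, and the formula recovers the constant $1+\pi/\sqrt{2}$ of Theorem \ref{T:norm h h to bmoa}.
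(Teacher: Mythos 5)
Your lower bound is correct and in fact slightly cleaner than the paper's: testing $f\equiv 1$, changing variables, checking the pointwise limit $(\mathcal{H}(1)\circ\sigma_a)'(z)\to -2/(1-z^2)$ and invoking Fatou does give $\|\mathcal{H}\|\ge 1+\bigl(\int_{\mathbb{D}}4|1-z^2|^{-2}U_\mu\,dA\bigr)^{1/2}$ in a few lines, whereas the paper splits $z\mathcal{H}(1)'$ into $-\tfrac1z\log\tfrac1{1-z}+\tfrac1{1-z}$ and kills the logarithmic piece by dominated convergence. The consistency check with $\mu=\delta_0$ is also correct.

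The upper bound, however, has a genuine gap: you correctly observe that the triangle-inequality bound $|(\mathcal{H}f)'(w)|\le\int_0^1 t|1-tw|^{-2}\,dt$ is too weak off the real axis, you reduce the problem to the coefficient inequality $\sum_{n\ge1}n^2|c_n(\psi)|^2\omega_{n-1}\le 4\sum_{k\ge0}\omega_{2k}$ uniformly over all $\psi\in\text{Aut}(\mathbb{D})$ and all $f$ in the unit ball of $H^\infty$, and then you stop -- ``I would attack the coefficient inequality directly'' is a plan, not a proof, and as stated that inequality is at least as hard as the theorem itself. The cancellation you are looking for is exactly the identity \eqref{Main id for Deriv}, already established earlier in the paper by rotating the contour in $\mathcal{H}(f)'(w)=\int_0^1 t f(t)(1-tw)^{-2}\,dt$ onto the arcs $\psi_s(w)=s/((s-1)w+1)$: it yields $\mathcal{H}(f)'(w)=b(w)/(1-w)$ with $\|b\|_{H^\infty}\le\|f\|_{H^\infty}$, i.e. the \emph{sharp} pointwise bound $|(\mathcal{H}f)'(w)|\le\|f\|_{H^\infty}/|1-w|$. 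With this in hand the seminorm of $\mathcal{H}(f)$ is dominated by $\|f\|_{H^\infty}$ times the seminorm of $\log(1-z)$ in $M(\mathcal{D}_\mu)$, and the remaining task -- showing that for radial $\mu$ the supremum over automorphisms of $\int_{\mathbb{D}}|1-w|^{-2}U_\mu(\phi(w))\,dA(w)$ equals $\int_{\mathbb{D}}4|1-z^2|^{-2}U_\mu\,dA$ -- is the content of Proposition \ref{norm log MDmu}. That proposition is proved by precisely the diagonalisation you describe, but applied only to the single function $1/(1-w)$ rather than to all of $\mathcal{H}(f)\circ\psi$, and it still requires a further argument (the paper writes the resulting quantity as $\sup_{|w|<1}\Re B(w)$ for an analytic $B$ continuous up to $\mathbb{T}$ and applies the maximum principle to locate the supremum at $w=1$). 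So your outline isolates the right quantity but is missing both the factorisation that makes the supremum tractable and the maximum-principle step that evaluates it.
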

\noindent Significantly, the measure associated to the $Q_p$ spaces satisfies the hypothesis of the above theorem and we are able to compute the norm of the Hilbert matrix operator from $H^\infty$ into $Q_p$.

\vspace{11 pt}\noindent 
The methodology developed in this article also works  for the  Ces\'aro operator

$$
\mathcal{C}(f)(z) = \sum_{n=0}^{\infty} \frac{1}{n+1}\left(\sum_{k=0}^{n}a_{k}\right)z^{n}, \text{ with }\,f (z) = \sum_{n=0}^{\infty} a_{n}z^{n} \in \text{Hol}(\mathbb{D}).
$$
\noindent 
The analogy between the Hilbert matrix operator and the Ces\'aro operator comes from their matrix representations, that is  
\[
    \mathcal{C}=\left(
\begin{array}{ccccc}
            1 & 0  & 0 & 0 & \dots  \\ [4pt]
  \frac{1}{2} & \frac{1}{2}  & 0 & 0& \dots  \\ [4pt]
  \frac{1}{3} & \frac{1}{3}  & \frac{1}{3} & 0 &\dots  \\ [4pt]
  \vdots  & \vdots & \vdots & \vdots & \ddots\\
\end{array}
\right).
\]
We observe that $\mathcal{H}$ is obtained from $\mathcal{C}$ by the following formal manipulation: we erase the zeros in each column of $\mathcal{C}$ and we shift up the columns to their first non-zero entry. In rigorous terms this is equivalent to the following algebraic relation. Let
$e_n(z) = z^n, n = 0, 1, 2, ... $ be the monomials, which form an orthonormal basis of
$H^2$. We have that
$$
\mathcal{C}(e_{n})=S^{n}\mathcal{H}(e_{n}),
$$
where $S$ is the shift operator, that is $Sf(z)=zf(z)$.

\begin{thm}\label{norm C from h to mdm}
Let $d\mu(z)$ be a positive, radial, Borel measure on $\mathbb{D}$. If $\mathcal{C}:H^\infty\to M(\mathcal{D}_\mu)$ is bounded then 
\begin{equation*}
   \|\mathcal{C}\|_{H^\infty\to M(\mathcal{D}_\mu)}=1+\left(\int_{\mathbb{D}}\frac{4}{|1-z^{2}|^{2}}U_{\mu}(z)dA(z)\right)^{1/2}.
\end{equation*}
\end{thm}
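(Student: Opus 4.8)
The plan is to prove the two matching bounds $\|\mathcal{C}\|_{H^\infty\to M(\mathcal{D}_\mu)}\ge 1+I^{1/2}$ and $\le 1+I^{1/2}$, where I abbreviate $I=\int_{\mathbb{D}}\frac{4}{|1-z^2|^2}U_\mu(z)\,dA(z)$, so as to reach exactly the value of Theorem \ref{norm h from h to mdm}. The organizing remark is that $I^{1/2}$ is the value at $\phi=\mathrm{id}$ of the $M(\mathcal{D}_\mu)$-seminorm of $g_\ast(z)=\log\frac{1+z}{1-z}$, because $g_\ast'(z)=\frac{2}{1-z^2}$; thus $g_\ast$ plays the role of the extremal profile and the summand $1$ will come from the value at the origin. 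Throughout I use that for radial $\mu$ the weight $U_\mu$ is radial and that the seminorm is conformally invariant, i.e. equals $\sup_{\psi\in\mathrm{Aut}(\mathbb{D})}\bigl(\int_{\mathbb{D}}|(g\circ\psi)'|^2U_\mu\,dA\bigr)^{1/2}$.

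For the lower bound I would test the constant function $f\equiv1$. By the relation $\mathcal{C}(e_0)=\mathcal{H}(e_0)$ one has $\mathcal{C}(1)(z)=\mathcal{H}(1)(z)=-\frac{\log(1-z)}{z}$, so $|\mathcal{C}(1)(0)|=1$. To recover the seminorm I drag the logarithmic singularity at $z=1$ to the boundary through the automorphisms $\psi_s(z)=\frac{z+s}{1+sz}$, $s\in(0,1)$. The identity $1-\psi_s(z)=\frac{(1-s)(1-z)}{1+sz}$ shows that $(\mathcal{C}(1)\circ\psi_s)'\to g_\ast'$ locally uniformly on $\mathbb{D}$ as $s\to1^-$, so Fatou's lemma gives $\liminf_{s\to1^-}\int_{\mathbb{D}}|(\mathcal{C}(1)\circ\psi_s)'|^2U_\mu\,dA\ge I$. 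Hence the seminorm of $\mathcal{C}(1)$ is at least $I^{1/2}$ and $\|\mathcal{C}\|_{H^\infty\to M(\mathcal{D}_\mu)}\ge1+I^{1/2}$. This step is identical to the Hilbert case precisely because the two operators coincide on constants.

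For the upper bound, fix $f$ with $\|f\|_\infty\le1$. The value at the origin is controlled trivially since $\mathcal{C}(f)(0)=f(0)$, so $|\mathcal{C}(f)(0)|\le1$, and it remains to bound the seminorm by $I^{1/2}$. I would reduce this to two ingredients, exactly as in the proof of Theorem \ref{norm h from h to mdm}: (a) a pointwise domination $|\mathcal{C}(f)'(z)|\le\frac{2}{|1-z^2|}=|g_\ast'(z)|$ on $\mathbb{D}$; and (b) the fact that, for radial $\mu$, the quantity $\sup_{\psi}\int_{\mathbb{D}}|(g_\ast\circ\psi)'|^2U_\mu\,dA$ is attained at $\psi=\mathrm{id}$ and equals $I$. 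Given (a), composing with $\psi$ yields $|(\mathcal{C}(f)\circ\psi)'|\le|(g_\ast\circ\psi)'|$ pointwise, hence $\int|(\mathcal{C}(f)\circ\psi)'|^2U_\mu\,dA\le\int|(g_\ast\circ\psi)'|^2U_\mu\,dA$ for every $\psi$, and (b) bounds the supremum by $I$. Ingredient (b) is a statement about $g_\ast$ alone—its only boundary singularities sit at $\psi^{-1}(\pm1)$, whose residues cancel as those points merge, so the antipodal configuration $\psi=\mathrm{id}$ is extremal—and it is already available from the proof of Theorem \ref{norm h from h to mdm}.

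The main obstacle is ingredient (a) for the Cesàro operator. For $\mathcal{H}$ the corresponding estimate is immediate, since $\mathcal{H}(f)'(z)=\int_0^1\frac{t\,f(t)}{(1-tz)^2}\,dt$ does not involve $f'$, whence $|\mathcal{H}(f)'(z)|\le\int_0^1\frac{t}{|1-tz|^2}\,dt\le\frac{2}{|1-z^2|}$ using only $\|f\|_\infty\le1$. For $\mathcal{C}$, however, the parallel representation $\mathcal{C}(f)(z)=\int_0^1\frac{f(tz)}{1-tz}\,dt$ gives $\mathcal{C}(f)'(z)=\frac{1}{z}\bigl(\frac{f(z)}{1-z}-\mathcal{C}(f)(z)\bigr)$, which depends on $f'$ and cannot be dominated by a kernel times $\|f\|_\infty$ through the triangle inequality (the crude bound blows up near $z=0$, where in truth the apparent $1/z$ singularity cancels). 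I expect the genuine work to be concentrated here: either proving the refined pointwise bound by exploiting that cancellation, or—more robustly—replacing the derivative form of the seminorm by an equivalent boundary/function-value expression in the spirit of the $\BMOA$ identity \eqref{equivalent BMOA norm}, in which the elementary estimate $|\mathcal{C}(f)(e^{i\theta})|\le\int_0^1\frac{dt}{|1-te^{i\theta}|}$ is directly usable. Once (a) is in hand it combines with (b) to give the upper bound and close the proof.
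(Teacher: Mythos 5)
Your lower bound is correct: testing on $f\equiv 1$, using $\mathcal{C}(1)=\mathcal{H}(1)$, dragging the singularity to the boundary with $\psi_s$ and invoking Fatou is a streamlined version of the paper's own argument (which instead splits $z\mathcal{H}(1)'$ into two pieces and uses dominated convergence). The genuine gap is in the upper bound, and it sits exactly where you flagged it. Your ingredient (a), the pointwise estimate $|\mathcal{C}(f)'(z)|\le \frac{2}{|1-z^2|}\|f\|_{H^\infty}$, is never proved. Writing $h(z)=z\,\mathcal{C}(f)(z)=\int_0^z\frac{f(w)}{1-w}\,dw$, one has $\mathcal{C}(f)'(z)=\frac{f(z)}{z(1-z)}-\frac{h(z)}{z^2}$; the first term alone has modulus $\frac{|f(z)|}{|z|\,|1-z|}$, which for $|f(z)|$ close to $1$ exceeds $\frac{2}{|1-z^2|}$ wherever $|1+z|>2|z|$, so any proof of (a) must exploit cancellation between the two terms, and that cancellation depends on the global behaviour of $f$, not only on $\|f\|_\infty$. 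No such argument is offered, and it is doubtful that this pointwise form is even true. Moreover, even granting (a), your ingredient (b) is not ``already available from the proof of Theorem~\ref{norm h from h to mdm}'': Proposition~\ref{norm log MDmu} computes the seminorm of $\log(1-z)$, whose supremum over automorphisms is attained only in the limit $a\to 1$, not at the identity; the assertion that the seminorm of $g_*(z)=\log\frac{1+z}{1-z}$ is attained at $\psi=\mathrm{id}$ and equals your $I^{1/2}$ is a different statement (the triangle inequality only yields $2I^{1/2}$) and would need its own proof.

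The idea you are missing --- and the paper's actual proof --- is to pass from $g=\mathcal{C}(f)$ to $h(z)=zg(z)$ and compare them through the identity
\[
\int_{\mathbb{D}}|u'(z)|^2U_\mu(z)\,dA(z)=\int_{\mathbb{D}}\left(\int_0^{2\pi}|u(e^{i\theta})|^2P_\zeta(e^{i\theta})\,\frac{d\theta}{2\pi}-|u(\zeta)|^2\right)d\mu(\zeta),
\]
applied to $u=g\circ\sigma_a$ and $u=h\circ\sigma_a$. Since $|h|=|g|$ on $\mathbb{T}$ while $|h|\le|g|$ in $\mathbb{D}$, this gives $\int_{\mathbb{D}}|g'|^2U_\mu(\sigma_a)\,dA\le\int_{\mathbb{D}}|h'|^2U_\mu(\sigma_a)\,dA$ for every $a$. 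The function $h$ does satisfy a clean pointwise bound, namely $(1-z)h'(z)=f(z)$, hence $|h'(z)|\le\|f\|_\infty/|1-z|$, and Proposition~\ref{norm log MDmu} (about $\log(1-z)$, which is all that is needed) then closes the estimate with the sharp constant. Your fallback suggestion of a boundary-value formula points in the right direction, but the crude bound $|\mathcal{C}(f)(e^{i\theta})|\le\int_0^1|1-te^{i\theta}|^{-1}dt$ would lose the exact constant; the comparison of $g$ with $zg$, which preserves boundary moduli exactly while only decreasing the subtracted interior term, is what makes the argument close.
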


\noindent In light of the analogy between $\mathcal{C}$ and $\mathcal{H}$, a comparative analysis of the Hilbert matrix operator results with their Ces\'aro operator counterparts is warranted.

\vspace{11 pt}
\noindent
The rest of the article is divided in six sections. Section 2 is devoted to preliminary material: we recall the definition of Hardy spaces and we describe the conformally invariant Dirichlet spaces $M(\mathcal{D}_\mu)$. In Section 3, we deal with $\BMOA$ and we prove Theorem \ref{T:norm h h to bmoa}. In section 4, we briefly describe the action of the Hilbert matrix operator from $H^\infty$ into the mean Lipschitz spaces. In section 5, we describe the action of the Hilbert matrix operator from $H^\infty$ into $M(\mathcal{D}_\mu)$ and we prove Theorem \ref{h from h to mdm}.\\
We prove Theorem \ref{norm C from h to mdm} in Section 7.
Finally, in section 6, we prove Theorem \ref{norm h from h to mdm}. 
We conclude the article with some open problems.

\vspace{11 pt}
\noindent 
We use the following notation. By the expressions $f \lesssim g$, we mean that there exists a positive constant $C$ such that  
$$
f\leq C g. 
$$
If both \(f\lesssim g\) and \(f\gtrsim g\) hold, we write \(f\sim g\). The capital letter \(C\), we denote constants whose values may change every time they appear. Finally with $\delta_{m,n}$ we denote the classical Kronecker symbol.
%
\vspace{22 pt}
\section{Preliminary}
\noindent
In this preliminary section, we recall some definitions  that will be used throughout the whole article.

\subsection{Hardy spaces}
Let   $1\leq p<\infty$ and   $f\in \text{Hol}(\mathbb{D})$. For  $0\leq r<1$, let
\[
M_p(r,f)=\left(\dfrac{1}{2\pi}\int_{0}^{2\pi}\vert f(re^{i\theta})\vert^p\, d\theta\right)^{\frac{1}{p}}\ 
\]
be the usual integral means of $f$ on the circle of radius $r$.  The Hardy space $H^p =H^p(\mathbb{D})$ consists of all the functions $f\in \text{Hol}(\mathbb D)$ such that 
\[
\|f\|_{H^p}=\sup_{0\leq r<1}M_p(r,f) <\infty\ 
\]
and, for $p=\infty$, $\,H^\infty$ consists of the bounded analytic functions on $\mathbb{D}$, i.e. all the functions in $\text{Hol}(\mathbb D)$ such that
\[
\|f\|_{\infty}:=\sup_{z\in\mathbb{D}}|f(z)|<\infty.
\]
See \cite{Duren70} for the theory of Hardy spaces.
Important examples of $H^\infty$ functions are the automorphisms, $\phi \in \text{Aut}(\mathbb{D})$.  We recall that every $\phi \in \text{Aut}(\mathbb{D})$ can be written as
$$
\phi(z) = e^{i\theta} \sigma_{a}(z) \text{ with } \sigma_{a}(z) = \frac{a-z}{1-\overline{a}z},
$$ 
where $\theta$ is real and $a\in\mathbb{D}$.

\subsection{Conformally invariant Dirichlet spaces}
The M\"obius invariant spaces  $M(\mathcal{D}_\mu)$ generated by the Dirichlet space $\mathcal{D}_\mu$ are defined in \eqref{second normmdm2} \footnote{ For the theory of superharmonic weighted Dirichlet spaces $\mathcal{D}_\mu$, we refer to \cite{Aleman1993}.}.
In order to avoid that $M(\mathcal{D}_\mu)$ contains only constant functions, we always assume that 
\begin{equation}\label{Def measure}
    \int_{\mathbb{D}}(1-|z|^{2})d\mu(z) <\infty .
\end{equation}
If \eqref{Def measure} does not hold, $M(\mathcal{D}_\mu)$ is called \emph{trivial}.
From \cite{BAO2018a}, it is known that if $M(\mathcal{D}_\mu)$ is not trivial, then $\mathcal{D} \subseteq M(\mathcal{D}_\mu) \subseteq \BMOA$. Furthermore,
$M(\mathcal{D}_\mu) = \BMOA$ if and only if $\mu(\mathbb{D}) < \infty$.

\vspace{11 pt}\noindent 
 For $0 < p < \infty$, the $Q_p$ space consists of all the functions $f \in \text{Hol}(\mathbb{D})$ such that
\begin{equation}\label{norm Q_p}
\|f\|_{Q_{p}}^{2} = |f(0)|+ \sup_{a\in\mathbb{D}}\left( \int_{\mathbb{D}} |f'(z)|^{2}(1 - |\sigma_{a}(z)|^{2})^{p} dA(z)\right)^{1/2} < \infty.
\end{equation}
Clearly, $Q_{p_{1}} \subseteq Q_{p_{2}}$ for $0 < p_1 < p_2 < \infty$
and for $0<p<1$  
$$
Q_p=M(\mathcal{D}_{\mu_p}),
$$
where 
\begin{equation}
\label{mdp}
d\mu_p(z) = -\Delta[(1-|z|^{2})^{p}]dA(z),
\end{equation}
and $\Delta$ is the Laplacian \cite{BAO2018a}.
We have that $Q_1 = \text{BMOA}$ and $Q_p \subsetneq \text{BMOA}$ when $0 < p < 1$. We refer to J. Xiao’s monographs \cite{Xiao2000} and \cite{Xiao2006}  for more results on $Q_p$ spaces.
%
\vspace{22 pt}
\section{Proof of Theorem \ref{T:norm h h to bmoa}}
\noindent 
It is well know that $\mathcal{H}$ does not map $H^\infty$ into its self. Indeed, by \eqref{Integral representation}, we note that 
\begin{align}\label{H1}
 \mathcal{H}(1)(z)&=\int_{0}^{1}\frac{1}{1-tz}dt=\frac{1}{z}\log\left(\frac{1}{1-z}\right)
\end{align}
and this function does not belong to $H^\infty$.

\begin{proof}[Proof of Theorem \ref{T:norm h h to bmoa}]
In \cite{Lanucha2012} Lanucha, Nowak and Pavlovic  proved that $\mathcal{H}$ is bounded from $H^\infty$ into $\BMOA$. 
By using the same computations of N. Danikas and Siskakis \cite[Theorem 1]{Danikas1993} and the expression \eqref{equivalent BMOA norm} for the $\BMOA$ norm, we realize that
\begin{equation*}\label{norm H1} 
\|\mathcal{H}(1)\|_{\BMOA}=1+\frac{\pi}{\sqrt{2}},
\end{equation*}
from which it follows that
\begin{equation}\label{estimate below}
1+\frac{\pi}{\sqrt{2}}\leq \|\mathcal{H}\|_{H^\infty\to \BMOA}.
\end{equation}
\noindent 
In order to prove the upper bound for the norm of $\mathcal{H}$, because of \eqref{Integral representation}, we note that 
\begin{equation*}\label{derivative Hilbert transform}
 \mathcal{H} (f)'(z)=\int_{0}^{1}\frac{t\,f(t)}{(1-tz)^{2}}dt.
\end{equation*}
The convergence of the integral and the analyticity of the function $f$ guarantee that we can change the path of integration to 
$$
z(t) = \frac{s}{(s-1)w+1} ,\qquad  0 \leq s \leq 1,
$$
which describes circular arcs contained in $\mathbb{D}$. Therefore, we obtain that
\begin{equation*}\label{main identity}
    \mathcal{H}(f)'(w)=\frac{1}{1-w}\int_{0}^{1}\frac{s}{(s-1)w+1}f\left(\frac{s}{(s-1)w+1}\right)ds.
\end{equation*}
The quantity inside the integral is $\psi_{s}(w)f(\psi_{s}(w))$, where  
$$
\psi_{s}(w)=\frac{s}{(s-1)w+1}
$$
 maps the unit disc into itself for each $0\leq s<1$.
 Thus, in absolute value, 
 $$
 |\psi_{s}(w)f(\psi_{t}(w))|\leq \|f\|_{H^{\infty}}
 $$ 
 for each $w \in \mathbb{D}$ and $0\leq s\leq 1$. Consequently,
\begin{equation*}
\begin{split}
\|\mathcal{H}(f)\|_{\BMOA}&= |\mathcal{H}(f)(0)|+\sup_{a\in\mathbb{D}}\left(\int_{\mathbb{D}}\left|\mathcal{H}(f)'(z)\right|^{2}\log\left|\frac{1-\overline{a}z}{a-z}\right|^{2}dA(z)\right)^{1/2}\\
&\leq \left(1+\sup_{a\in\mathbb{D}}\left(\int_{\mathbb{D}}\left|\frac{1}{1-z}\right|^{2}\log\left|\frac{1-\overline{a}z}{a-z}\right|^{2}dA(z)\right)^{1/2}\right)\|f\|_{H^{\infty}}\\
&=\left(1+\frac{\pi}{\sqrt{2}}\right)\|f\|_{H^{\infty}},
\end{split}
\end{equation*}
where in the last equality we have used the fact that $\|\log(1-z)\|_{\BMOA}=\frac{\pi}{\sqrt{2}}$, see \cite{Danikas1984}. Due to the above inequality and \eqref{estimate below}, we have the conclusion.\\
\end{proof}

\noindent 
The proof of Theorem \ref{T:norm h h to bmoa} provides a usefully identity for the derivative $\mathcal{H}(f)'$ when $f\in H^\infty$. Indeed 
\begin{equation}\label{Main id for Deriv}
  \mathcal{H}(f)'(z)=\frac{b(z)}{1-z},
\end{equation}
with $b\in H^\infty$ and $\|b\|_{H^{\infty}}\leq\|f\|_{H^{\infty}}$.   
%
\vspace{22 pt}
\section{Hilbert matrix operator into mean Lipschitz spaces}
\noindent 
The inclusion $\mathcal{H}\left( H^\infty\right)\subset \BMOA$ can be refined to
$$
\mathcal{H}\left( H^\infty\right)\subseteq \bigcap_{1<p<\infty}\Lambda\left(p,\frac{1}{p}\right).
$$ 
 For $1 < p < \infty$ and $0 < \alpha \leq 1$ the mean Lipschitz space $\Lambda(p,\alpha)$ consists  all the analytic functions $f$ on the unit disc for which
$$
\|f\|_{\Lambda(p,\alpha)}=|f(0)|+\sup_{0<r<1}M_p(r,f')(1-r^{2})^{1-\alpha}<\infty.
$$
The spaces $\Lambda(p,\frac{1}{p})$ with $ 1 < p < \infty$ grow in size with $p$, they are all subspaces of $\BMOA$ and they all contain unbounded functions such as $\log(1-z)$, see \cite{Bourdon1989}.

\vspace{11 pt}\noindent 

\begin{thm}\label{Prop norm hilbert in Lipschitz}
The Hilbert matrix operator $\mathcal{H}$ is bounded from $H^\infty$ into $\Lambda(p,1/p)$ and
$$
\|\mathcal{H}\|_{H^\infty\to \Lambda(p,\frac{1}{p})}\leq 1+\|\log(1-z) \|_{\Lambda(p,\frac{1}{p})}.
$$
\end{thm}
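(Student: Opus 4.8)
The plan is to reduce everything to the pointwise derivative identity \eqref{Main id for Deriv} obtained right after the proof of Theorem \ref{T:norm h h to bmoa}. Recall that for $f\in H^\infty$ we have $\mathcal{H}(f)'(z)=\dfrac{b(z)}{1-z}$ with $b\in H^\infty$ and $\|b\|_{H^\infty}\leq \|f\|_{H^\infty}$. This immediately yields the pointwise bound
\[
\bigl|\mathcal{H}(f)'(z)\bigr|\leq \frac{\|f\|_{H^\infty}}{|1-z|}=\|f\|_{H^\infty}\,\bigl|(\log(1-z))'\bigr|,\qquad z\in\mathbb{D},
\]
since $(\log(1-z))'=-\tfrac{1}{1-z}$. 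The whole point of the argument is that $\mathcal{H}(f)'$ is dominated, pointwise and uniformly in $f$, by the derivative of the single function $\log(1-z)$ scaled by $\|f\|_{H^\infty}$.

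First I would handle the two pieces of the $\Lambda(p,1/p)$ norm separately. For the point evaluation, setting $z=0$ in the integral representation \eqref{Integral representation} gives $\mathcal{H}(f)(0)=\int_0^1 f(t)\,dt$, hence $|\mathcal{H}(f)(0)|\leq \|f\|_{H^\infty}$. For the seminorm, I would integrate the $p$-th power of the pointwise bound over the circle of radius $r$: by monotonicity of the integral,
\[
M_p\bigl(r,\mathcal{H}(f)'\bigr)\leq \|f\|_{H^\infty}\,M_p\bigl(r,(\log(1-z))'\bigr),\qquad 0\leq r<1.
\]
Multiplying through by the weight $(1-r^2)^{1-1/p}$ and taking the supremum over $0<r<1$ then produces exactly $\|f\|_{H^\infty}$ times the seminorm of $\log(1-z)$ in $\Lambda(p,1/p)$.

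Finally I would assemble the two estimates. Since $\log(1-z)$ vanishes at the origin, its $\Lambda(p,1/p)$ seminorm coincides with its full norm $\|\log(1-z)\|_{\Lambda(p,1/p)}$, so that
\[
\|\mathcal{H}(f)\|_{\Lambda(p,1/p)}\leq \|f\|_{H^\infty}+\|f\|_{H^\infty}\,\|\log(1-z)\|_{\Lambda(p,1/p)}=\bigl(1+\|\log(1-z)\|_{\Lambda(p,1/p)}\bigr)\|f\|_{H^\infty},
\]
which is the claimed bound. I do not expect a genuine obstacle here: the substantive work was already done in establishing \eqref{Main id for Deriv}, and the remaining steps are the routine passage from a pointwise inequality to an inequality for integral means. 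The only point requiring care is that the right-hand side be finite, i.e.\ that $\log(1-z)\in\Lambda(p,1/p)$ for every $1<p<\infty$; this is precisely the membership recorded above via \cite{Bourdon1989}, so the displayed norm inequality is meaningful for all such $p$.
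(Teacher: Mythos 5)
Your proposal is correct and follows essentially the same route as the paper: both arguments rest on the identity \eqref{Main id for Deriv} to dominate $M_p(r,\mathcal{H}(f)')$ by $\|f\|_{H^\infty}\,M_p\bigl(r,(\log(1-z))'\bigr)$, identify the resulting weighted supremum with the $\Lambda(p,1/p)$ seminorm of $\log(1-z)$ (which equals its full norm since $\log(1-0)=0$), and add the trivial bound $|\mathcal{H}(f)(0)|\leq\|f\|_{H^\infty}$. No gaps.
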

\begin{proof}
The proof is similar to Theorem \ref{T:norm h h to bmoa}.  By \cite[Theorem 1.7]{Hedenmalm2000}, we have that $\log(1-z) \in \Lambda(p,\frac{1}{p}) $ for $1<p<\infty$ with 
$$
\|\log(1-z)\|_{\Lambda(p,\frac{1}{p})}=\sup_{0<r<1}
(1-r)^{1-1/p}\left( \int_{\mathbb{T}} \left| \frac{1}{1-re^{-i\theta}}\right|^p d\theta\right)^{1/p}.
$$
By \eqref{Main id for Deriv}, we have that
\begin{align*}
   (1-r^2)^{1-1/p}M_p(r,\mathcal{H}(f)')&\leq (1-r^2)^{1-1/p}\left( \int_{\mathbb{T}} \left| \frac{1}{1-re^{-i\theta}}\right|^p d\theta\right)^{1/p}\|f\|_{H^\infty} \\
    &\leq \|\log(1-z)\|_{\Lambda(p,\frac{1}{p})}\|f\|_{H^\infty}.
\end{align*}
Consequently
$$
\|\mathcal{H}(f)\|_{\Lambda(p,\frac{1}{p})}\leq \left( 1+\|\log(1-z)\|_{\Lambda(p,\frac{1}{p})}\right)\|f\|_{H^\infty}.
$$
\end{proof}
\noindent
We point out that the exact value of the $\|\mathcal{H}\|_{H^\infty\to \Lambda(p,\frac{1}{p})}$
is not provided by Theorem \ref{Prop norm hilbert in Lipschitz}. However, we are able to compute it when $p=2$.
\begin{prop}
The Hilbert matrix operator $\mathcal{H}$ is bounded from $H^\infty$ into $\Lambda(2,1/2)$ and
$$
\|\mathcal{H}\|_{H^\infty\to \Lambda(2,\frac{1}{2})}= 1+\|\log(1-z) \|_{\Lambda(2,\frac{1}{2})}=2.
$$
\end{prop}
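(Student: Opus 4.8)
The plan is to establish the two-sided bound $\|\mathcal{H}\|_{H^\infty\to\Lambda(2,\frac12)}=2$ by matching an upper and a lower estimate. For the upper bound I would simply specialize Theorem \ref{Prop norm hilbert in Lipschitz} to $p=2$, which yields $\|\mathcal{H}\|_{H^\infty\to\Lambda(2,\frac12)}\le 1+\|\log(1-z)\|_{\Lambda(2,\frac12)}$, so the entire question reduces to evaluating the single number $\|\log(1-z)\|_{\Lambda(2,\frac12)}$. Since $\log(1-z)$ vanishes at the origin and $(\log(1-z))'=-\tfrac{1}{1-z}=-\sum_{n\ge0}z^n$, Parseval's identity gives $M_2(r,(\log(1-z))')^2=\sum_{n\ge0}r^{2n}=(1-r^2)^{-1}$. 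With $\alpha=\tfrac12$ the weight in the $\Lambda(2,\frac12)$ norm is $(1-r^2)^{1-\alpha}=(1-r^2)^{1/2}$, so $(1-r^2)^{1/2}M_2(r,(\log(1-z))')\equiv 1$ for every $r\in(0,1)$. Hence $\|\log(1-z)\|_{\Lambda(2,\frac12)}=1$ and the upper bound $\|\mathcal{H}\|_{H^\infty\to\Lambda(2,\frac12)}\le 2$ follows.

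For the lower bound I would test $\mathcal{H}$ on the constant function $1$, exactly as in the proof of Theorem \ref{T:norm h h to bmoa}. By \eqref{H1} we have $\mathcal{H}(1)(z)=\sum_{n\ge0}\tfrac{z^n}{n+1}$, so that $\mathcal{H}(1)(0)=1$ and $\mathcal{H}(1)'(z)=\sum_{m\ge0}\tfrac{m+1}{m+2}z^m$. Parseval again produces $M_2(r,\mathcal{H}(1)')^2=\sum_{m\ge0}\left(\tfrac{m+1}{m+2}\right)^2 r^{2m}$, so the task is to compute $\sup_{0<r<1}(1-r^2)\sum_{m\ge0}\left(\tfrac{m+1}{m+2}\right)^2 r^{2m}$. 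Since each coefficient $\left(\tfrac{m+1}{m+2}\right)^2$ is strictly below $1$, this quantity is bounded above by $(1-r^2)\sum_{m\ge0}r^{2m}=1$; on the other hand these coefficients converge to $1$, so Abel's theorem (writing the coefficient as $1$ plus a null sequence and using $(1-x)\sum x^m=1$) forces the expression to tend to $1$ as $r\to1^-$. The supremum is therefore exactly $1$, whence $\|\mathcal{H}(1)\|_{\Lambda(2,\frac12)}=|\mathcal{H}(1)(0)|+1=2$. Because $\|1\|_{H^\infty}=1$, this gives $\|\mathcal{H}\|_{H^\infty\to\Lambda(2,\frac12)}\ge 2$, and combining the two bounds closes the argument.

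I expect the only genuinely delicate point to be the evaluation of the supremum in the lower bound: the weighted mean $(1-r^2)M_2(r,\mathcal{H}(1)')^2$ stays \emph{strictly} below $1$ for every fixed $r$ and attains its supremum only in the limit $r\to1^-$, so it is the Abel--Tauberian limit argument that pins the norm down to $2$ rather than to some smaller value. By contrast, the identity $(1-r^2)^{1/2}M_2(r,(\log(1-z))')\equiv 1$ used for the upper bound holds for every $r$, making that half of the proof a routine Parseval computation.
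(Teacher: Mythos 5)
Your argument is correct, and it follows the paper's overall skeleton (upper bound by specializing Theorem \ref{Prop norm hilbert in Lipschitz} to $p=2$, lower bound by testing on the constant function $1$), but the way you evaluate the lower bound is genuinely different. The paper writes $z\,\mathcal{H}(1)'(z)=-\tfrac{1}{z}\log\tfrac{1}{1-z}+\tfrac{1}{1-z}=f_1+f_2$, applies the reverse triangle inequality $M_2(r,f_1+f_2)\ge |M_2(r,f_1)-M_2(r,f_2)|$, and uses that the $f_1$ contribution is $o\bigl((1-r^2)^{-1/2}\bigr)$ so that only the $f_2=(\log\tfrac{1}{1-z})'$ term survives in the limit $r\to 1^-$; this yields $\|\mathcal{H}(1)\|_{\Lambda(2,\frac12)}\ge 1+\|\log(1-z)\|_{\Lambda(2,\frac12)}$ and has the advantage of being the same decomposition the paper reuses later for the $M(\mathcal{D}_\mu)$ norm. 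You instead compute the Taylor coefficients of $\mathcal{H}(1)'(z)=\sum_{m\ge0}\tfrac{m+1}{m+2}z^m$ directly, apply Parseval, and identify $\sup_{0<r<1}(1-r^2)\sum_{m\ge 0}\bigl(\tfrac{m+1}{m+2}\bigr)^2 r^{2m}=1$ via the elementary Abelian fact that $(1-x)\sum c_m x^m\to \lim c_m$ for a convergent coefficient sequence. This is more self-contained and in fact slightly stronger: it shows the supremum is \emph{exactly} $1$ (attained only in the limit), hence pins down $\|\mathcal{H}(1)\|_{\Lambda(2,\frac12)}=2$ as an equality rather than only the one-sided bound the paper needs. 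Both halves of your computation (the identity $(1-r^2)^{1/2}M_2\bigl(r,(\log(1-z))'\bigr)\equiv 1$ and the strict inequality plus limit for $\mathcal{H}(1)'$) are consistent with the paper's normalization of $M_p$ and of the $\Lambda(p,\alpha)$ norm, so the argument is complete.
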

\begin{proof}
Since in Theorem \ref{Prop norm hilbert in Lipschitz} we have already shown the upper bound, we have to provide only the estimate from below.    
Since $\mathcal{H}(1)(1)=1$ and 
$$
S\,(\mathcal{H}(1)'(z))=-\frac{1}{z}\log\left(\frac{1}{1-z}\right)+\frac{1}{1-z}=f_{1}(z)+f_{2}(z),
$$
we have 
\begin{align*}
\|\mathcal{H}\|_{H^\infty \to \Lambda(2,\frac{1}{2})}&\geq \|\mathcal{H}(1)\|_{\Lambda(2,\frac{1}{2})}\\
&=1+\sup_{0<r<1}M_2(r,\mathcal{H}(1)')(1-r^{2})^{1/2}\\
&\geq 1+ \lim_{r\to1}M_2(r,S\,\mathcal{H}(1)')(1-r^{2})^{1/2}\\
&\geq 1+\lim_{r\to1}|M_2(r,f_1)-M_2(r,f_2)|(1-r^{2})^{1/2}\\
&=1+\lim_{r\to1}M_{2}(r, \left(\log(1-z)\right)')(1-r^{2})^{1/2}=1+\|\log(1-z)\|_{\Lambda(2,\frac{1}{2})}.
\end{align*}
Since $\|\log(1-z)\|_{\Lambda(2,\frac{1}{2})}=1$, conclusion follows.\\
\end{proof}
\vspace{22 pt}
\section{Hilbert matrix operator into conformally invariant Dirichlet spaces}
\noindent 
We start by proving Theorem \ref{h from h to mdm}.
\begin{proof}[Proof of Theorem \ref{h from h to mdm}]
The proof of Theorem \ref{h from h to mdm} is similar to \cite[Theorem 1.2]{Bao2019}. Nevertheless, for completeness, we include it here.

\vspace{11 pt}\noindent 
$(i)\Rightarrow(ii)$. Let $\mathcal{H}\left(H^\infty\right) \subseteq M(\mathcal{D}_{\mu})$.  Since 
$ \mathcal{H}(1) \in \mathcal{H}\left(H^\infty\right)$, 
we have that $M(\mathcal{D}_{\mu})$ is not trivial and \cite[Theorem 3.3]{BAO2018a} implies that 
$$
\sup_{w\in\mathbb{D}} V_{\mu}(w) < \infty.
$$
In order to prove that $\log(1-z)$ is in $M(D_{\mu})$, 
according to \eqref{second norm Mdm}, it is enough to show that
\begin{equation}\label{itoii}
\sup_{\phi \in \text{Aut}(\mathbb{D})}\int_{\mathbb{D}\setminus \frac{1}{2}\mathbb{D}}\left|\frac{1}{1-z}\right|^{2}V_{\mu}(\phi(z))dA(z)<\infty.
\end{equation}
Indeed
\begin{align*}
\infty&>\sup_{\phi \in \text{Aut}(\mathbb{D})}\int_{\mathbb{D}\setminus \frac{1}{2}\mathbb{D}}\left|\mathcal{H}(1)'(z)\right|^{2}V_{\mu}(\phi(z))dA(z)\\
&=\sup_{\phi \in \text{Aut}(\mathbb{D})}\int_{\mathbb{D}\setminus \frac{1}{2}\mathbb{D}}\left|\frac{1}{z}\frac{1}{1-z}-\frac{1}{z^{2}}\log\frac{1}{1-z}\right|^{2}V_{\mu}(\phi(z))dA(z).
\end{align*}
Since
\begin{align*}
\sup_{\phi \in \text{Aut}(\mathbb{D})}\int_{\mathbb{D}\setminus \frac{1}{2}\mathbb{D}}\left|\frac{1}{z^{2}}\log\frac{1}{1-z}\right|^{2}&V_{\mu}(\phi(z))dA(z)\leq\\
&16\sup_{w\in\mathbb{D}} V_{\mu}(w)\int_{\mathbb{D}\setminus \frac{1}{2}\mathbb{D}}\left|\log\frac{1}{1-z}\right|^{2}dA(z)<\infty,
\end{align*}
\eqref{itoii} holds.

\vspace{11 pt} 
\noindent $(ii)\Rightarrow(iii)$. Let $\log(1-z)\in M(\mathcal{D}_{\mu})$. By using \eqref{second norm Mdm} and a change of variables, we have that 
$$
\sup_{a\in\mathbb{D},\lambda\in\mathbb{T}}\int_{\mathbb{D}}\left|\frac{1}{1-\lambda\sigma_{a}(z)}\right|^{2}\frac{(1-|a|^{2})^{2}}{|1-\overline{a}z|^{4}}V_{\mu}(z)dA(z)<\infty.
$$
Taking $a=0$  in the above condition, we have 
$$
\sup_{\lambda\in\mathbb{T}}\int_{\mathbb{D}}\frac{V_{\mu}(z)}{\left|1-\lambda z\right|^{2}}dA(z)<\infty.
$$

\vspace{11 pt}
\noindent 
\noindent $(iii)\Leftrightarrow(iv)$. We set 
$(1-|z|^2)d\nu(z)=V_{\mu}(z)dA(z)$ and apply \cite[Lemma 2.2]{Bao2018}.

\vspace{11 pt}\noindent 
\noindent $(iv)\Rightarrow(i)$. 
Let $f \in H^\infty$. 
By using \eqref{Main id for Deriv}, we note that
\begin{align*}
  &\int_{\mathbb{D}} \left|\mathcal{H}(f)'(z)\right|^{2}V_{\mu}(\phi(z))dA(z)\\
  &\qquad\leq \int_{\mathbb{D}} \left|\frac{1}{1-z}\right|^{2}V_{\mu}(\phi(z))dA(z)\,\|f\|_{\infty}^2\\
  &\qquad =\int_{\mathbb{D}} \left|\frac{1}{1-\phi(z)}\right|^{2} \frac{\left(1-|a|^2\right)^2}{|1-\overline{a}z|^4}V_{\mu}(z)dA(z)\,\|f\|_{\infty}^2,
  \end{align*}
  where we have chosen 
  $$
  \phi(z)=\lambda \frac{a-z}{1-\overline{a}z},\,\, \lambda\in\mathbb{T},a\in\mathbb{D}.
  $$
Consequently, $\mathcal{H}$ is bounded from $H^\infty$ into $M(\mathcal{D}_\mu)$ if 
\begin{align*}\label{Cond for M(Dm) eq }
I=&\sup_{a\in\mathbb{D,\lambda\in\mathbb{T}}}\int_{\mathbb{D}}\left|\frac{1}{1-\lambda\sigma_{a}(z)}\right|^{2} \frac{(1-|a|^{2})^{2}}{|1- \overline{a}z|^{4}}V_{\mu}(z)dA(z)\\
=&\sup_{a\in\mathbb{D,\lambda\in\mathbb{T}}}\frac{(1-|a|^{2})^{2}}{|1- \lambda a|^{2}}\int_{\mathbb{D}}\left|\frac{1}{1+\frac{\lambda-\overline{a}}{1-\lambda a}z}\right|^{2} \frac{1}{|1- \overline{a}z|^{2}}V_{\mu}(z)dA(z)<\infty.
\end{align*}
We set $\eta=\frac{\lambda-\overline{a}}{1-\lambda a}$, $|\eta|=1$. Through a change of variables, we get 
\begin{align*}
I&=\sup_{a\in\mathbb{D,\lambda\in\mathbb{T}}}\frac{(1-|a|^{2})^{2}}{|1- \lambda a|^{2}}\int_{\mathbb{D}}\left|\frac{1}{1+\zeta}\right|^{2} \frac{1}{|1- \overline{a\eta}\zeta|^{2}}V_{\mu}(\overline{\eta}\zeta)dA(\zeta)\\
&=\sup_{a\in\mathbb{D,\lambda\in\mathbb{T}}}\int_{\mathbb{D}}\left|\frac{1}{1+\zeta}+\frac{\overline{a\eta}\zeta}{1- \overline{a\eta}\zeta}\right|^{2}V_{\mu}(\overline{\eta}\zeta)dA(\zeta)\\
&\leq C\left(\sup_{\eta\in\mathbb{T}}\int_{\mathbb{D}}\left|\frac{1}{1+\zeta}\right|^{2}V_{\mu}(\overline{\eta}\zeta)dA(\zeta)+\sup_{a\in\mathbb{D,\lambda\in\mathbb{T}}}\int_{\mathbb{D}}\left|\frac{1}{1- \overline{a\eta}\zeta}\right|^{2}V_{\mu}(\overline{\eta}\zeta)dA(\zeta)\right) \\
&\leq C \sup_{\eta\in\mathbb{T}}\int_{\mathbb{D}}\left|\frac{1}{1+\eta z}\right|^{2}V_{\mu}(z)dA(z)+\sup_{a\in\mathbb{D}}\int_{\mathbb{D}}\left|\frac{1}{1- \overline{a}z}\right|^{2}V_{\mu}(z)dA(z).
\end{align*}
By using conditions $(iii)$ and $(iv)$, the proof is complete.\\
\end{proof}

\begin{rem}\label{counter ex}
Let
$$
d\mu_{a}(z)=\Delta\left(\frac{1}{\left(\log\frac{e^{1+a}}{1-|z|^{2}}\right)^{a}}\right)dA(z)
$$
with $z \in \mathbb{D}$, $a \in (0,\infty)$ and $\Delta$ the classical Laplace operator. The proof \cite[Theorem 1.2]{Bao2019} gives that
\begin{enumerate}[(i)]
    \item If $0 < a \leq 1$, then $M(\mathcal{D}_{\mu_{a}} )$ is not trivial and $\mathcal{H}(H^\infty) \not\subseteq M(\mathcal{D}_{\mu_{a}} )$.
    \item If $a>1$, then $\mathcal{H}(H^\infty) \not\subseteq M(\mathcal{D}_{\mu_{a}} ) \not\subseteq \bigcap_{0<p<\infty} Q_p$.
\end{enumerate}
\noindent Consequently, there are measures $\mu$ for which $\mathcal{H}(H^\infty) \not\subseteq M(\mathcal{D}_\mu).$
\end{rem} 

\vspace{22 pt}
\section{Norm of the Hilbert matrix operator into conformally invariant Dirichlet spaces}
\noindent For particular measures $\mu$, it is possible to compute $\|\mathcal{H}\|_{H^\infty\to M(\mathcal{D}_\mu)}$. We need two preliminary lemmas.
\begin{lem}
Let $d\mu(z)$ be a radial measure. Then $U_\mu(z)dA(z)$ is also a radial measure.    
\end{lem}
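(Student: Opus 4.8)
The plan is to reduce the assertion about the \emph{measure} $U_{\mu}(z)\,dA(z)$ to an assertion about the \emph{function} $U_{\mu}$. Since the normalized area measure $dA$ is itself invariant under every rotation $R_{\theta}(z)=e^{i\theta}z$, the product $U_{\mu}(z)\,dA(z)$ is a radial measure exactly when $U_{\mu}$ is a radial function, i.e. when $U_{\mu}(e^{i\theta}z)=U_{\mu}(z)$ for all $\theta\in\mathbb{R}$ and all $z\in\mathbb{D}$. Thus the entire lemma comes down to proving this rotational invariance of $U_{\mu}$, and I would phrase the hypothesis ``$\mu$ is radial'' precisely as the invariance of the pushforward, $(R_{\theta})_{*}\mu=\mu$, so that $d\mu(e^{i\theta}u)=d\mu(u)$.

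The computation I would carry out is a single change of variables in the integral defining $U_{\mu}$. Starting from
$$
U_{\mu}(e^{i\theta}z)=\int_{\mathbb{D}}\log\left|\frac{1-\overline{w}e^{i\theta}z}{e^{i\theta}z-w}\right|^{2}d\mu(w),
$$
I would substitute $w=e^{i\theta}u$; radiality of $\mu$ preserves both the domain $\mathbb{D}$ and the measure. The algebraic heart of the argument is then the kernel identity: in the numerator the rotation factors cancel, $\overline{e^{i\theta}u}\,e^{i\theta}z=\overline{u}z$, giving $1-\overline{u}z$, while the denominator becomes $e^{i\theta}(z-u)$, whose modulus is just $|z-u|$. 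Hence the integrand returns to $\log\left|\tfrac{1-\overline{u}z}{z-u}\right|^{2}$, and we obtain $U_{\mu}(e^{i\theta}z)=U_{\mu}(z)$, as required.

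The main obstacle here is only a mild bookkeeping one: justifying the change of variables cleanly, which amounts to using the invariance $(R_{\theta})_{*}\mu=\mu$ correctly and verifying the cancellation of the $e^{\pm i\theta}$ factors in the Green's-type kernel. No integrability difficulty arises, since $U_{\mu}$ is already a well-defined superharmonic function under the standing assumption on $\mu$. I expect the proof to be short, with the rotational invariance of the kernel being the one point worth displaying explicitly.
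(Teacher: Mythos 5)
Your proposal is correct and follows essentially the same route as the paper: both reduce the claim to the rotational invariance of the function $U_{\mu}$ and establish it by the substitution $w\mapsto e^{i\theta}u$ in the defining integral, using the radiality of $\mu$ and the cancellation of the rotation factors in the kernel. No gaps.
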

\begin{proof}
Let $\lambda \in \mathbb{T}$. We note that
\begin{align*}
U_\mu(\lambda z)=&\int_\mathbb{D} \log \left\vert \frac{1-\overline{w}\lambda z}{\lambda z-w}\right\vert d\mu(w)=\int_\mathbb{D} \log \left\vert \frac{1-\overline{w\overline{\lambda}}z}{z-\overline{\lambda} w}\right\vert d\mu(w)\\
=&\int_\mathbb{D} \log \left\vert \frac{1-\overline{w\overline{\lambda}}z}{z-\overline{\lambda} w}\right\vert d\mu(\overline{\lambda}w)=U_\mu(z).
\end{align*}
Therefore
$$
U_\mu(z)dA(z)=U_\mu(r)\,rdr\frac{d\theta}{2\pi},
$$
from which the Lemma follows.\\
\end{proof}

\begin{lem}\label{estimates moment}
Let $d\mu(z)$ be a positive, radial measure such that $\log(1-z) \in M(\mathcal{D}_\mu)$. Then
$$
\sum_{n=1}^{\infty}  \widehat{U_\mu}_{2n+1}<\infty ,
$$
where
$$
\widehat{U_\mu}_{2n+1}=\int_0^1 r^{2n+1}U_\mu(r)dr .
$$
\end{lem}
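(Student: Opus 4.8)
The plan is to collapse the series into a single radial integral and then dominate it by a quantity that membership $\log(1-z)\in M(\mathcal{D}_\mu)$ already controls. First I would record two structural facts used throughout. Since $\left|\frac{1-\overline{w}z}{z-w}\right|=|\sigma_z(w)|^{-1}\geq 1$ for all $z,w\in\mathbb{D}$, the weight $U_\mu$ is nonnegative; and by the preceding lemma it is radial, so I may write $U_\mu(z)=U_\mu(r)$ with $r=|z|$. Nonnegativity is precisely what licenses interchanging summation and integration via Tonelli's theorem.

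The first main step is to sum the series. Using the geometric series $\sum_{n=1}^{\infty}r^{2n+1}=\frac{r^3}{1-r^2}$ for $0\le r<1$ together with Tonelli, I obtain
\[
\sum_{n=1}^{\infty}\widehat{U_\mu}_{2n+1}=\int_0^1 U_\mu(r)\sum_{n=1}^{\infty}r^{2n+1}\,dr=\int_0^1\frac{r^3}{1-r^2}U_\mu(r)\,dr.
\]

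The second main step connects this to the hypothesis. Since the identity map lies in $\text{Aut}(\mathbb{D})$, I take $\phi=\mathrm{id}$ in \eqref{second normmdm2}; because $\left(\log(1-z)\right)'=-\frac{1}{1-z}$, the assumption $\log(1-z)\in M(\mathcal{D}_\mu)$ forces $\int_{\mathbb{D}}\frac{U_\mu(z)}{|1-z|^2}\,dA(z)<\infty$. Rewriting this in polar coordinates, with $U_\mu$ radial and $\int_0^{2\pi}\frac{d\theta}{|1-re^{i\theta}|^2}=\frac{2\pi}{1-r^2}$, the left-hand side equals $2\int_0^1\frac{r}{1-r^2}U_\mu(r)\,dr$, so this radial integral is finite.

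Finally, I would compare the two integrals. Since $r^3\le r$ on $[0,1]$ and the integrand is nonnegative,
\[
\sum_{n=1}^{\infty}\widehat{U_\mu}_{2n+1}=\int_0^1\frac{r^3}{1-r^2}U_\mu(r)\,dr\le\int_0^1\frac{r}{1-r^2}U_\mu(r)\,dr<\infty,
\]
which is the claim. I do not expect a serious obstacle: the two points deserving care are justifying the interchange of sum and integral (settled by nonnegativity of $U_\mu$) and confirming that \emph{membership} yields the single $\phi=\mathrm{id}$ integral rather than only the supremum. The extra factor $r^2$ incurred in passing from $\widehat{U_\mu}_{2n+1}$ to the full series only improves the bound and shares the same singular behaviour as $r\to1^-$, so no compensating estimate near the boundary is needed.
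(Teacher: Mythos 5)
Your proof is correct and follows essentially the same route as the paper: sum the geometric series to get $\int_0^1 \frac{r^3}{1-r^2}U_\mu(r)\,dr$, relate $\int_0^1\frac{r}{1-r^2}U_\mu(r)\,dr$ to $\int_{\mathbb{D}}|1-z|^{-2}U_\mu(z)\,dA(z)$ via the Poisson-kernel identity, and bound the latter by choosing $\phi=\mathrm{id}$ in the $M(\mathcal{D}_\mu)$ norm of $\log(1-z)$. The only difference is cosmetic --- the paper passes from the radial integral to the area integral while you go in the reverse direction --- and your explicit appeals to Tonelli and to the nonnegativity of $U_\mu$ are welcome additions.
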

\begin{proof}
We note that
\begin{align*}
\sum_{n=1}^\infty  \widehat{U_\mu}_{2n+1}=&\sum_{n=1}^\infty  \int_{0}^1 r^{2n}U_\mu(r)rdr=\sum_{n=0}^\infty  \int_{0}^1 r^{2n}U_\mu(r)rdr-\widehat{U_\mu}_{1}\\
\leq &  \int_{0}^1 \frac{1}{1-r^2}U_\mu(r)rdr=\int_{0}^1 \int_{0}^{2\pi} \frac{1}{1-e^{i\theta}r}\frac{1}{1-e^{-i\theta}r} \frac{d\theta}{2\pi} U_\mu(r)rdr\\
=&\int_{\mathbb{D}} \left\vert \frac{1}{1-z}\right\vert^2 U_\mu(z)dA(z)\leq\|\log(1-z)\|_{M(\mathcal{D}_\mu)}^2.
\end{align*}
Since $\log(1-z) \in M(\mathcal{D}_\mu)$, the conclusion follows.\\
\end{proof}

\noindent 
When $d\mu(z)$ is a radial measure, \eqref{second normmdm2} becomes
\begin{align*}    
 \|f\|_{M(\mathcal{D}_{\mu})}&
 =|f(0)|+\sup_{a \in \mathbb{D}, \lambda \in \mathbb{T}}\left(\int_{\mathbb{D}} |f'(w)|^{2}U_{\mu}(\lambda\sigma_a(w))dA(w)\right)^{1/2}\\
&=|f(0)|+\sup_{a \in \mathbb{D}}\left(\int_{\mathbb{D}} |f'(w)|^{2}U_{\mu}(\sigma_a(w))dA(w)\right)^{1/2}.
\end{align*}

\begin{prop}\label{norm log MDmu}
Let $d\mu(z)$ be a positive, radial, Borel measure on $\mathbb{D}$. 
Then 
\begin{equation*}
\|\log(1-z)\|^2_{M(\mathcal{D}_\mu)}=
\int_{\mathbb{D}}\left|\frac{2}{1-z^2}\right|^{2}U_{\mu}(z)dA(z).
\end{equation*}
\end{prop}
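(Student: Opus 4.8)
The plan is to reduce the conformally invariant norm to a supremum of a single integral over the Möbius parameter, change variables so that the weight $U_\mu$ is no longer composed with an automorphism, and then exploit the radiality of $U_\mu$ to turn the problem into a one–dimensional inequality between angular averages of the two kernels. Since $\log(1-z)$ vanishes at the origin and $(\log(1-z))'=-1/(1-z)$, the radial form of the norm recalled above gives
$$\|\log(1-z)\|^2_{M(\mathcal{D}_\mu)}=\sup_{a\in\mathbb{D}}\int_{\mathbb{D}}\frac{U_\mu(\sigma_a(w))}{|1-w|^2}\,dA(w)=:\sup_{a\in\mathbb{D}}\Psi(a).$$
I would then substitute $z=\sigma_a(w)$ (an involution, with $dA(w)=(1-|a|^2)^2|1-\overline{a}z|^{-4}dA(z)$) and use $1-\sigma_a(z)=((1-a)+(1-\overline{a})z)/(1-\overline{a}z)$ to obtain
$$\Psi(a)=\int_{\mathbb{D}}\frac{(1-|a|^2)^2}{|(1-a)+(1-\overline{a})z|^2\,|1-\overline{a}z|^2}\,U_\mu(z)\,dA(z).$$
Writing $(1-a)+(1-\overline{a})z=(1-a)(1+\beta z)$ with $\beta=(1-\overline{a})/(1-a)$ and $|\beta|=1$, this reads $\Psi(a)=\frac{(1-|a|^2)^2}{|1-a|^2}\int_{\mathbb{D}}|1+\beta z|^{-2}|1-\overline{a}z|^{-2}U_\mu(z)\,dA(z)$.

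For the lower bound I would let $a\to1^-$ along the positive real axis. There $\beta=1$ and $(1-|a|^2)^2/|1-a|^2=(1+a)^2$, so the integrand converges pointwise to $4\,|1+z|^{-2}|1-z|^{-2}U_\mu(z)=4|1-z^2|^{-2}U_\mu(z)$. Since the integrand is nonnegative, Fatou's lemma yields $\sup_{a}\Psi(a)\ge\int_{\mathbb{D}}4|1-z^2|^{-2}U_\mu\,dA$, which is exactly the right–hand side of the proposition (and this already settles the case where the right–hand side is infinite).

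For the matching upper bound I would use that $U_\mu$ is radial, so that $U_\mu(z)\,dA(z)$ is a radial measure and only the angular average of each kernel survives. Writing $K_a(r)$ for the angular average of the kernel of $\Psi(a)$ and $K_*(r)=\frac1{2\pi}\int_0^{2\pi}4|1-r^2e^{2i\theta}|^{-2}\,d\theta=4/(1-r^4)$, it suffices to prove the pointwise inequality $K_a(r)\le K_*(r)$ for every $r\in(0,1)$ and $a\in\mathbb{D}$; integrating this against the common nonnegative radial mass then gives $\Psi(a)\le\int_{\mathbb{D}}4|1-z^2|^{-2}U_\mu\,dA$ for all $a$. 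The average $K_a(r)$ is computable in closed form by Parseval's identity, which gives, for $|u|,|v|<1$, $\frac1{2\pi}\int_0^{2\pi}\frac{d\theta}{|1-ue^{i\theta}|^2|1-ve^{i\theta}|^2}=|u-v|^{-2}\big(\tfrac{|u|^2}{1-|u|^2}+\tfrac{|v|^2}{1-|v|^2}-\tfrac{u\overline{v}}{1-u\overline{v}}-\tfrac{\overline{u}v}{1-\overline{u}v}\big)$; one applies this with $u=-r$ and $v=\overline{a\beta}\,r$ after a rotation absorbing the unimodular $\beta$.

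The main obstacle is precisely the inequality $K_a(r)\le 4/(1-r^4)$, and inside it the dependence on $\arg a$: the estimate is sharp and is attained only in the limit $a\to1^-$, so one must show that rotating $a$ off the positive real axis decreases the averaged kernel. I would reduce to real $a$ by proving that, for fixed $|a|=\rho$ and fixed $r$, $K_a(r)$ is maximised at $a=\rho$ — a monotonicity/symmetrisation statement in the phase $\arg a$ which the closed form above makes explicit, though the computation is somewhat heavy (one expects $a=\rho$ to be the maximiser and $a=-\rho$ the minimiser). It then remains to verify the single–variable inequality $K_\rho(r)=\frac{1+\rho^2-2\rho^2r^2}{(1-r^2)(1-\rho^2r^2)}+\frac{2\rho}{1+\rho r^2}\le\frac{4}{1-r^4}$ for $\rho\in[0,1)$, with equality as $\rho\to1$; clearing denominators reduces this to an elementary polynomial inequality in $\rho$ and $r$. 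Combining this upper bound with the Fatou lower bound yields the claimed identity.
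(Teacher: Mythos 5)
Your reduction to $\sup_{a}\Psi(a)$ and the Fatou lower bound along real $a\to1^-$ match the opening of the paper's proof, but your upper bound proceeds by a genuinely different mechanism: you sum the angular average of the kernel in closed form via Parseval and aim at the pointwise inequality $K_a(r)\le 4/(1-r^4)$, whereas the paper keeps the power-series expansion, reduces everything to the moments $\widehat{U_\mu}_{2n+1}=\int_0^1 r^{2n+1}U_\mu(r)\,dr$, and locates the supremum by applying the maximum principle to the harmonic function $\Re B(w)$, $B(w)=2\sum_{n\ge1}(-1)^{n+1}\widehat{U_\mu}_{2n+1}w^n$ --- for which it needs Lemma \ref{estimates moment} (summability of the moments, hence continuity of $B$ up to $\mathbb{T}$). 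Your route avoids that lemma altogether, and Fatou also settles the case where both sides are infinite. The step you flag as heavy is in fact short once you make the same substitution the paper uses, $w=\frac{(1-\overline{a})a}{1-a}$ with $|w|=|a|=\rho$: since $1+w=\frac{1-|a|^2}{1-a}$, the prefactor $\frac{(1-|a|^2)^2}{|1-a|^2\,|u-v|^2}$ collapses to $r^{-2}$ and your Parseval formula gives $K_a(r)=\frac{1}{1-r^2}+\frac{\rho^2}{1-\rho^2r^2}+2\Re\frac{w}{1+r^2w}$; writing $\Re\frac{w}{1+r^2w}=\frac{\rho\cos\psi+r^2\rho^2}{1+2r^2\rho\cos\psi+r^4\rho^2}$ and differentiating in $\cos\psi$ yields the positive numerator $\rho(1-r^4\rho^2)$, so the maximum over the phase is at $w=\rho$, i.e.\ $a=\rho$, as you conjectured; finally $K_\rho(r)$ is term-by-term increasing in $\rho$ with limit $\frac{2}{1-r^2}+\frac{2}{1+r^2}=\frac{4}{1-r^4}$, which is exactly your single-variable inequality. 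So the proposal is correct and complete once these two computations are written out; what it buys over the paper's argument is the elimination of the auxiliary moment lemma and of the maximum principle, at the cost of an explicit (but elementary) kernel computation.
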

\begin{proof}
Since the measure $d\mu(z)$ is radial, we observe that 
\begin{align*}
\|\log(1-z)\|_{M(\mathcal{D}_\mu)}&=\sup_{a\in\mathbb{D}}\int_{\mathbb{D}}\left|\frac{1}{1-z}\right|^{2}U_{\mu}(\sigma_{a}(z))dA(z)\\
&=\sup_{a\in\mathbb{D}}\int_{\mathbb{D}}\left|\frac{1}{1-\sigma_{a}(z)}\right|^{2}|\sigma_{a}'(z)|^{2}U_{\mu}(z)dA(z)\\
&=\sup_{a\in\mathbb{D}}\int_{\mathbb{D}}\frac{(1-|a|^{2})^{2}}{|1-a+(1-\overline{a})z|^{2}}\frac{1}{|1-\overline{a}z|^{2}}\,U_{\mu}(z)dA(z)\\
&=\sup_{a\in\mathbb{D}}\int_{\mathbb{D}}\left|\frac{1-\overline{a}}{1-a+(1-\overline{a})z}+\frac{\overline{a}}{1-\overline{a}z}\right|^{2}U_{\mu}(z)dA(z).
\end{align*}
In particular, by using the power series expansion, we have that
\begin{small}
\begin{align*}
\|\log(1-z) \|_{M(\mathcal{D}_\mu)}
&=\sup_{a\in\mathbb{D}}\int_{\mathbb{D}}\left|\sum_{n=0}^{\infty}\left((-1)^{n}\left(\frac{1-\overline{a}}{1-a}\right)^{n+1}+\overline{a}^{n+1}\right)z^{n}\right|^{2}U_{\mu}(z)dA(z)\\
&=\sup_{a\in\mathbb{D}}\int_{\mathbb{D}}\left|\sum_{n=0}^{\infty}\left(-\left(\frac{1-\overline{a}}{1-a}\right)^{n+1}+\overline{-a}^{n+1}\right)(-z)^{n}\right|^{2}U_{\mu}(z)dA(z)\\
&=\sup_{a\in\mathbb{D}}\int_{\mathbb{D}}\left|\sum_{n=1}^{\infty}\left(-\left(\frac{1-\overline{a}}{1-a}\right)^{n}+\overline{-a}^{n}\right)(-z)^{n-1}\right|^{2}U_{\mu}(z)dA(z).
\end{align*}
\end{small}
Since $d\mu(z)$ is radial and 
$$
\int_0^{2\pi} e^{(n-1)i\left(\theta+\pi\right)}e^{-(m-1)i\left(\theta+\pi\right)}\frac{d\theta}{2\pi}=\delta_{n,m},\quad n,m\in\mathbb{N},
$$
we have that
\begin{align*}
\|\log(1-z)\|_{M(\mathcal{D}_\mu)}&=\sup_{a\in\mathbb{D}} \int_0^1 \sum_{n=1}^\infty\left\vert-\left(\frac{1-\overline{a}}{1-a}\right)^{n}-\overline{a}^{n}\right\vert^2 r^{2n+1}U_\mu(r)dr\\
&= \sup_{a\in\mathbb{D}}  \sum_{n=1}^\infty\left\vert-\left(\frac{1-\overline{a}}{1-a}\right)^{n}-\overline{a}^{n}\right\vert^2 \widehat{U_\mu}_{2n+1}.
\end{align*}
Hence,
\begin{small}
\begin{align*}
\|\log(1-z) \|_{M(\mathcal{D}_\mu)}&=\sup_{a\in\mathbb{D}}\sum_{n=1}^\infty  \widehat{U_\mu}_{2n+1}\left( 1+|a|^{2n}+2(-1)^{n+1}\Re\left(\frac{1-\overline{a}}{1-a} a\right)^{n}\right) .
\end{align*}
\end{small}
With the change of variable
\begin{equation*}\label{change variable}
 w=\frac{1-\overline{a}}{1-a} a , 
\end{equation*}
where $|w|=|a|$ for $a \in \overline{\mathbb{D}}\setminus \{1\}$ and $|w|\to 1$ as $a\to 1$,
we note that the function
$$
B(w)= \sum_{n=1}^\infty \widehat{U_\mu}_{2n+1} 2(-1)^{n+1}w^n
$$
is analytic in the unit disc. Moreover, since $B(w)$ is continuous in $\overline{\mathbb{D}}$ due to Lemma \ref{estimates moment}, the maximum principle for harmonic functions tells us that
$$
\sup_{w \in \mathbb{ D}}\Re B(w)=\sup_{w \in \mathbb{T}}\Re B(w) .
$$
Consequently,
\begin{small}
\begin{align*}
\sup_{w \in \mathbb{T}}\Re B(w)&= -2\inf_{w \in \mathbb{T}}\Re\left( \sum_{n=1}^\infty \widehat{U_\mu}_{2n+1} (-1)^{n}w^n\right)\\
&=-2\inf_{w \in \mathbb{T}}\Re \int_{0}^1 \sum_{n=1}^\infty (-wr^2)^{n}U_\mu(r)rdr\\
&=2\widehat{U_\mu}_{1}-2\inf_{w \in \mathbb{T}}\Re \int_{0}^1  \frac{1}{1+wr^2}U_\mu(r)rdr\\
&=2\left( \widehat{U_\mu}_{1}-\int_{0}^1  \frac{1}{1+r^2}U_\mu(r)rdr\right)=\Re B(1) .
\end{align*}
\end{small}
In particular, due to Lemma \ref{estimates moment}, we know that
\begin{align*}
    \|\log(1-z) \|_{M(\mathcal{D}_\mu)}=&\lim_{a\to 1} \int_{\mathbb{D}}\left|\frac{1}{1-z}\right|^{2}U_{\mu}(\sigma_{a}(z))dA(z)\\
    =& \lim_{a\to 1, a\in \mathbb{R}} \int_{\mathbb{D}}\left|\frac{1}{1-z}\right|^{2}U_{\mu}(\sigma_{a}(z))dA(z)\, .
\end{align*}
Consequently
\begin{align*}
    \|\log(1-z) \|_{M(\mathcal{D}_\mu)}&=\lim_{a\to 1, a\in \mathbb{R}} \int_{\mathbb{D}}\left|\frac{1}{1-z}\right|^{2}U_{\mu}(\sigma_{a}(z))dA(z)\\
&=\lim_{a\to 1, a\in \mathbb{R}}  \int_{\mathbb{D}}\left|\frac{1}{1-\sigma_{a}(z)}\right|^{2}|\sigma_{a}'(z)|^{2}U_{\mu}(z)dA(z)\\
&=\lim_{a\to 1, a\in \mathbb{R}}  \int_{\mathbb{D}} \left\vert \frac{1+a}{(1+z)(1-az)}\right\vert^2 U_{\mu}(z)dA(z)\\
&=\int_{\mathbb{D}} \left\vert \frac{2}{1-z^2}\right\vert^2 U_{\mu}(z)dA(z)\, .
\end{align*}
\end{proof}

\noindent By using Proposition \ref{norm log MDmu}, we are now able to prove Theorem \ref{norm h from h to mdm}.
\begin{proof}[Proof of Theorem \ref{norm h from h to mdm}]
As we saw earlier \eqref{Main id for Deriv}, we have that 
$$
(1-z)\mathcal{H}(f)'(z)=b(z),
$$
where $b\in H^\infty$ and $\|b\|_{H^\infty}\leq \|f\|_{H^\infty}$. 
Consequently,
\begin{align*}
 \|\mathcal{H}(f)\|_{M(\mathcal{D}_{\mu})}&=|\mathcal{H}(f)(0)|+\sup_{\phi \in \text{Aut}(\mathbb{D})}\left(\int_{\mathbb{D}} |\mathcal{H}(f)'(w)|^{2}U_{\mu}(\phi(w))dA(w)\right)^{1/2}\\
 &=\left\vert\int_{0}^{1}f(t)dt\right\vert+\sup_{\phi \in \text{Aut}(\mathbb{D})}\left(\int_{\mathbb{D}} \left|\frac{b(w)}{1-w}\right|^{2}U_{\mu}(\phi(w))dA(w)\right)^{1/2}\\
 &\leq (1+\|\log(1-z) \|_{M(\mathcal{D}_\mu)})\|f\|_{H^\infty} .
\end{align*}
By Proposition \ref{norm log MDmu}, we have that
\begin{equation}\label{Norm M(Dm)}
  \|\mathcal{H}\|_{H^\infty\to M(\mathcal{D}_\mu)}\leq 1+\left(\int_{\mathbb{D}}\frac{4}{|1-z^{2}|^{2}}U_{\mu}(z)dA(z)\right)^{1/2}.
\end{equation}
Once again, we recall that 
\begin{equation}\label{defn f_1}
    z\left( \mathcal{H}(1)\right)'(z)=-\frac{1}{z}\log\left(\frac{1}{1-z}\right)+\frac{1}{1-z}=f_{1}(z)+f_{2}(z).
\end{equation}
We observe that the function
$h(z)=(1-z)f_1(z)$ is in $H^\infty$ with 
$$
\lim_{z\to 1}h(z)=0.
$$
Let $0<a<1$ and set
\begin{align*}
I^a= \int_{\mathbb{D}} |f_1(z)|^{2}U_{\mu}(\sigma_{a}(z))dA(z)&=\int_{\mathbb{D}} \left|\frac{h(z)}{(1-z)}\right|^{2}U_{\mu}(\sigma_{a}(z))dA(z)\\
&=\int_{\mathbb{D}} \left|\frac{h(\sigma_{a}(z))}{1-\sigma_{a}(z)}\right|^{2}|\sigma_{a}'(z)|^{2}U_{\mu}(z)dA(z)\\
&=\int_{\mathbb{D}} g_{a}(z)dA(z),
\end{align*}
where
$$
g_{a}(z)=\left|h(\sigma_{a}(z))\right|^{2}\frac{(1+a)^{2}}{|(1-az)(1+z)|^{2}}U_{\mu}(z).
$$
For $z\in \mathbb{D}$ fixed observe that $\lim_{a\to1}\sigma_{a}(z)=1$.
This implies  that for each fixed $z\in\mathbb{D}$, $g_a(z)\to0$ as $a\to1$.
Since $h\in H^\infty$ with norm $M$, we have 
$$
g_{a}(z)\leq k_{a}(z)=M^2 \frac{(1+a)^{2}}{|(1-az)(1+z)|^{2}}U_{\mu}(z).
$$
Moreover
$$
k_a(z)\to \frac{4M^2}{|1-z^2|^{2}}U_{\mu}(z),
$$
as $a\to1$ and, by repeating the proof of Proposition \ref{norm log MDmu}, 
$$
\int_{\mathbb{D}}k_a (z)dA(z)\to \int_{\mathbb{D}}\frac{4M^2}{|1-z^2|^{2}}U_{\mu}(z)dA(z)<\infty. 
$$
Therefore by dominated convergence \cite[p.59; Ex.20]{folland2013}, we have that 
\begin{equation}\label{Lim Ia}
    \lim_{ a \to 1} I^a=0.
\end{equation}
According to Proposition \ref{norm log MDmu}, we have that 
\begin{align*}
 \|\mathcal{H}(1)\|_{M(\mathcal{D}_{\mu})}&=|\mathcal{H}(1)(0)|+\sup_{\phi \in \text{Aut}(\mathbb{D})}\left(\int_{\mathbb{D}} |\mathcal{H}(1)'(w)|^{2}U_{\mu}(\phi(w))dA(w)\right)^{1/2}\\
 &\geq 1+\sup_{a\in\mathbb{D}}\left(\int_{\mathbb{D}} |z\mathcal{H}(1)'(w)|^{2}U_{\mu}(\sigma_{a}(w))dA(w)\right)^{1/2}\\
 &\geq 1+\lim_{\mathbb{R}\ni a\to1}\left(\int_{\mathbb{D}} |f_{1}(w)+f_{2}(w)|^{2}U_{\mu}(\sigma_{a}(w))dA(w)\right)^{1/2}.
 \end{align*}
 By using \eqref{Lim Ia}, the last $\liminf$ becomes 
\begin{align*}
 \lim_{\mathbb{R}\ni a\to1}&\left(\int_{\mathbb{D}} |f_{1}(w)+f_{2}(w)|^{2}U_{\mu}(\sigma_{a}(w))dA(w)\right)^{1/2}\\
 &\geq \lim_{\mathbb{R}\ni a\to1}  \left|\,\left(\int_{\mathbb{D}} |f_{1}(w)|^{2}U_{\mu}(\sigma_{a}(w))dA(w)\right)^{1/2}-\left(\int_{\mathbb{D}} |f_{2}(w)|^{2}U_{\mu}(\sigma_{a}(w))dA(w)\right)^{1/2}\right|\\
 &=\lim_{\mathbb{R}\ni a\to1}\left(\int_{\mathbb{D}} \left|\frac{1}{1-w}\right|^{2}U_{\mu}(\sigma_{a}(w))dA(w)\right)^{1/2}.\\
 \end{align*}
This implies that 
 \begin{align*}
\|\mathcal{H}(1)\|_{M(\mathcal{D}_{\mu})}     &\geq 1+\lim_{\mathbb{R}\ni a\to1}\left(\int_{\mathbb{D}} \left|\frac{1}{1-w}\right|^{2}U_{\mu}(\sigma_{a}(w))dA(w)\right)^{1/2}.\\
&= 1+\lim_{\mathbb{R}\ni a\to1}\left(\int_{\mathbb{D}} \left|\frac{1}{1-\sigma_{a}(w)}\right|^{2}|\sigma_{a}'(w)|^{2}U_{\mu}((w))dA(w)\right)^{1/2}\\
&\geq 1+\left(\int_{\mathbb{D}} \frac{4}{|1-w^{2}|^{2}}U_{\mu}(w)dA(w)\right)^{1/2}.
\end{align*}
\end{proof}
\noindent
A significant application of Theorem \ref{norm h from h to mdm} to $Q_p$ spaces with $\|\cdot \|_{M(\mathcal{D}_{\mu_p})}$ provides the norm of $\mathcal{H}$ in this situation.
\begin{cor}\label{norm H hinf to Qp}
The norm of $\mathcal{H}:H^\infty\to Q_p$ is equal to 
\begin{equation*}\label{Norm Q_p)}
   \|\mathcal{H}\|_{H^\infty\to Q_p}=1+\|\log(1-z)\|_{M(\mathcal{D}_{\mu_p})}.
\end{equation*}  
\end{cor}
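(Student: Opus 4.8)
The plan is to deduce the corollary directly from Theorem \ref{norm h from h to mdm} and Proposition \ref{norm log MDmu}, specialized to the measure $\mu_p$ defined in \eqref{mdp}. The first point to check is that $\mu_p$ satisfies the hypotheses of both results, namely that it is a positive, radial Borel measure. Radiality is immediate: the function $z\mapsto(1-|z|^2)^p$ depends only on $|z|$, so its Laplacian is again radial, and hence $d\mu_p=-\Delta[(1-|z|^2)^p]dA$ is a radial measure. Positivity follows by computing the radial Laplacian; writing $u(r)=(1-r^2)^p$ one finds
$$
\Delta u(r)=4p(1-r^2)^{p-2}(pr^2-1),
$$
which is strictly negative for $0<p<1$ and $0\le r<1$, so that $-\Delta u>0$ and $\mu_p$ is a positive measure. (This is, of course, part of the content of the identity $Q_p=M(\mathcal{D}_{\mu_p})$ recalled in Section 2.)

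Next I would record that $\mathcal{H}$ indeed maps $H^\infty$ into $Q_p$, so that the boundedness hypothesis of Theorem \ref{norm h from h to mdm} is genuinely met. For $p=1$ this is Theorem \ref{T:norm h h to bmoa}, since $Q_1=\BMOA$. For $0<p<1$, by the equivalence $(i)\Leftrightarrow(ii)$ of Theorem \ref{h from h to mdm} it suffices to know that $\log(1-z)\in M(\mathcal{D}_{\mu_p})=Q_p$, which is a classical fact for $0<p\le 1$. Thus the boundedness assumption needed to invoke Theorem \ref{norm h from h to mdm} holds for the whole relevant range of $p$.

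With these verifications in place the conclusion is essentially a substitution. Applying Theorem \ref{norm h from h to mdm} with $\mu=\mu_p$ gives
$$
\|\mathcal{H}\|_{H^\infty\to Q_p}=1+\left(\int_{\mathbb{D}}\frac{4}{|1-z^2|^2}U_{\mu_p}(z)dA(z)\right)^{1/2},
$$
while Proposition \ref{norm log MDmu}, again with $\mu=\mu_p$, identifies the integral on the right exactly as
$$
\left(\int_{\mathbb{D}}\frac{4}{|1-z^2|^2}U_{\mu_p}(z)dA(z)\right)^{1/2}=\|\log(1-z)\|_{M(\mathcal{D}_{\mu_p})}.
$$
Combining the two displays yields $\|\mathcal{H}\|_{H^\infty\to Q_p}=1+\|\log(1-z)\|_{M(\mathcal{D}_{\mu_p})}$, which is the claim.

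I do not expect a genuine obstacle here: the two quoted results were designed precisely so that the factor $\left(\int_{\mathbb{D}}4|1-z^2|^{-2}U_\mu\,dA\right)^{1/2}$ coincides with $\|\log(1-z)\|_{M(\mathcal{D}_\mu)}$ for radial $\mu$, and $\mu_p$ is radial. The only steps requiring any care are the verification that $\mu_p$ is radial and positive, so that the hypotheses truly apply, and the remark that $\log(1-z)\in Q_p$ secures boundedness for $0<p<1$; everything else is formal.
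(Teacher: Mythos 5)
Your proposal is correct and follows essentially the same route as the paper: apply Theorem \ref{norm h from h to mdm} with the radial measure $\mu_p$ from \eqref{mdp} and identify the resulting integral with $\|\log(1-z)\|_{M(\mathcal{D}_{\mu_p})}$ via Proposition \ref{norm log MDmu}. The only difference is that you spell out the positivity/radiality of $\mu_p$ and the boundedness hypothesis (via $\log(1-z)\in Q_p$), details the paper leaves implicit.
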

\begin{proof}
We apply Theorem \ref{norm h from h to mdm}. We recall that,  if $0<p<1$, then $Q_p=M(\mathcal{D}_{\mu_p})$ where, according to \eqref{mdp},
\[
d\mu_p(z) = -\Delta[(1-|z|^{2})^{p}]dA(z)=4p\left(1-|z|^2 p\right)(1-|z|^2)^{p-2}dA(z). 
\]
\end{proof}

\begin{rem}
 We note that, if we consider $Q_p$ with the norm $\|\cdot\|_{Q_p}$ defined as in \eqref{norm Q_p}, with exactly the same reasoning of Theorem \ref{norm h from h to mdm} we obtain that
 $$
 \|\mathcal{H}\|_{H^\infty\to Q_p}=1+\|\log(1-z)\|_{Q_p}.
 $$
 Indeed $(1-|z|^2)^pdA(z)$ is a radial measure.
\end{rem}

\vspace{22 pt}
\section{The norm of he Cesaro operator}

\noindent We use the following identity 

\begin{align*}
\int_{\mathbb{D}}\left(\int_{0}^{2\pi}|f(e^{i\theta})|^{2}P_{\zeta}(e^{i\theta})\frac{d\theta}{2\pi}-|f(\zeta)|^{2}\right)d\mu(\zeta)
&=\int_{\mathbb{D}} |f'(z)|^{2}U_{\mu}(z)\,dA(z)\\
\end{align*}
where $\mu$ is a positive Borel measure such that $(1-|z|^{2})d\mu(z)$ is finite. 

\begin{proof}[Proof of Theorem \ref{norm C from h to mdm}]

Firstly observe that  $\mathcal{C}(1)(z)=-\frac{1}{z}\log(1-z)$. As for the Hilbert matrix operator, this implies that 
$$
 \|\mathcal{C}\|_{H^\infty\to M(\mathcal{D}_\mu)}\geq1+\left(\int_{\mathbb{D}}\frac{4}{|1-z^{2}|^{2}}U_{\mu}(z)dA(z)\right)^{1/2}.
$$
Let $f\in H^\infty$. Set $h(z)=z\,\mathcal{C}(f)(z)$, then $(1-z)h'(z)=f(z)$. We have that
\begin{align*}
||\mathcal{C}f||_{M(D_\mu)}=|\mathcal{C}(f)(0)|+ \sup_{a\in\mathbb{D}}\left(\int_{\mathbb{D}}|\mathcal{C}(f)'(z)|^2 U_{\mu}(\sigma_{a}(z))dA(z)\right)^{1/2},
\end{align*}
and that $|\mathcal{C}(f)(0)|\leq ||f||_{H^\infty}$. For the integral we work as follows. Let $\mathcal{C}f=g$; then 
\begin{align*}
&\int_{\mathbb{D}}|g'(z)|^2 U_{\mu}(\sigma_{a}(z))dA(z)=\int_{\mathbb{D}}|(g\circ\sigma_{a})'(z)|^2 U_{\mu}(z)dA(z)\\
&=\int_{\mathbb{D}}\left(\int_{0}^{2\pi}|g(\sigma_{a}(e^{i\theta}))|^{2}P_{\zeta}(e^{i\theta})\frac{d\theta}{2\pi}-|g(\sigma_{a}(\zeta))|^{2}\right) d\mu(\zeta)\\
&=\int_{\mathbb{D}}\left(\int_{0}^{2\pi}|h(\sigma_{a}(e^{i\theta}))|^{2}P_{\zeta}(e^{i\theta})\frac{d\theta}{2\pi}-|h(\sigma_{a}(\zeta))|^{2}+|h(\sigma_{a}(\zeta))|^2-|g(\sigma_{a}(\zeta))|^{2}\right) d\mu(\zeta)\\
&=\int_{\mathbb{D}}|h'(z)|^2 U_{\mu}(\sigma_{a}(z))dA(z)-\int_{\mathbb{D}}(1-|\sigma_{a}(\zeta)|^{2})|g(\sigma_{a}(\zeta))|^{2}d\mu(\zeta)\\
&\leq\int_{\mathbb{D}}|h'(z)|^2 U_{\mu}(\sigma_{a}(z))dA(z).
\end{align*}
Since $h'(z)(1-z)=f(z)$ and due to Proposition \ref{norm log MDmu}, we have 
\begin{align*}
||\mathcal{C}f||_{M(D_\mu)}&\leq ||f||_{H^\infty}+ \sup_{a\in\mathbb{D}}\left(\int_{\mathbb{D}}\left|\frac{f(z)}{1-z}\right|^2 U_{\mu}(\sigma_{a}(z))dA(z)\right)^{1/2} \\
&\leq \left(1+\left(\int_{\mathbb{D}}\frac{4}{|1-z^{2}|^{2}}U_{\mu}(z)dA(z)\right)^{1/2}\right)||f||_{H^\infty}.
\end{align*}

\end{proof}

\section{Open questions}
\noindent 
We note that for every radial measure $\mu$ that satisfies \eqref{Def measure},
$$
\|\mathcal{H}\|_{H^\infty \to M(\mathcal{D}_\mu)} =1+\left(\int_{\mathbb{D}}\frac{4}{|1-z^{2}|^{2}}U_{\mu}(z)dA(z)\right)^{1/2}.
$$
However we are not able to compute the exact value of $\|\mathcal{H}\|_{H^\infty\to M(\mathcal{D}_\mu)}$ in general.

\begin{quest}\noindent
What is the exact norm of $\mathcal{H}$ from $H^\infty$ to $M(\mathcal{D}_\mu)$ for non-radial $\mu$?
\end{quest}

\vspace{22 pt}
\section{Acknowledgments}
\noindent 
 The authors would like to express their gratitude to professor Aristomenis Siskakis who point \eqref{Main id for Deriv} out.
\vspace{22 pt}

\bibliographystyle{plain}

\bibliography{ProJectA_B}
\vspace{11 pt}
\end{document}